\documentclass[12pt]{article}

\usepackage{amsfonts,mathrsfs}
\usepackage{amssymb,amsmath,amsbsy,amsthm}
\usepackage{dsfont}
\usepackage{blkarray}
\usepackage{tikz}
\usepackage{tkz-euclide}
\usepackage{tikz-cd}
\usepackage{enumerate}
\usetikzlibrary{patterns}
\usepackage{ae}
\usepackage{bm}
\usepackage{graphicx}
\usepackage{float}
\usepackage{cite}
\usepackage[d]{esvect}   
\usepackage{mathtools} 
\usepackage{indentfirst}
\usepackage{lineno}
\usepackage{titlesec}
\usepackage{enumitem}
\usepackage{color}
\usepackage{aliascnt}
\usepackage{varioref}
\usepackage{hyperref}
\hypersetup{colorlinks=true,linkcolor=cyan,anchorcolor=cyan,citecolor=red,CJKbookmarks=True,
	,urlcolor=cyan}
\usepackage{cleveref}
\usetikzlibrary{arrows.meta,calc,positioning}

\definecolor{switchblue}{RGB}{28,78,168}
\definecolor{switchteal}{RGB}{0,116,112}
\definecolor{switchred}{RGB}{174,36,33}
\definecolor{switchgray}{RGB}{105,105,105}

\numberwithin{equation}{section}

\def\min{\mbox{\rm min}}
\def\max{\mbox{\rm max}}

\DeclareFontEncoding{LS1}{}{}
\DeclareFontSubstitution{LS1}{stix}{m}{n}
\DeclareSymbolFont{stixletters}{LS1}{stix}{m}{it}
\DeclareMathAccent{\cev}{\mathord}{stixletters}{"91}
\DeclareMathAccent{\vec}{\mathord}{stixletters}{"92}
\DeclareMathAccent{\vecev}{\mathord}{stixletters}{"95}

\def\({\mbox{\rm (}}\def\){\mbox{\rm )}}

\makeatletter

\newcommand{\Rmnum}[1]{\expandafter\@slowromancap\romannumeral #1@}
\makeatother
\newtheorem{theorem}{Theorem}[section]
\newaliascnt{lemma}{theorem}
\newtheorem{lemma}[lemma]{Lemma}
\aliascntresetthe{lemma}

\newaliascnt{proposition}{theorem}
\newtheorem{proposition}[proposition]{Proposition}
\aliascntresetthe{proposition}

\newaliascnt{fact}{theorem}

\aliascntresetthe{fact}

\newaliascnt{definition}{theorem}

\aliascntresetthe{definition}

\newaliascnt{conjecture}{theorem}

\aliascntresetthe{conjecture}

\newaliascnt{corollary}{theorem}
\newtheorem{corollary}[corollary]{Corollary}
\aliascntresetthe{corollary}

\newaliascnt{claim}{theorem}

\aliascntresetthe{claim}

\newaliascnt{problem}{theorem}

\aliascntresetthe{problem}

\newaliascnt{question}{theorem}

\aliascntresetthe{question}

\newaliascnt{remark}{theorem}

\aliascntresetthe{remark}

\newaliascnt{example}{theorem}
\newtheorem{example}[example]{Example}
\aliascntresetthe{example}

\newaliascnt{notation}{theorem}

\aliascntresetthe{notation}

\begin{document}
	
\begin{center}
{\Large\bf Linear relations for Tutte polynomials of semimatroids}\\ [15pt]
	\end{center}
\begin{center}
Houshan Fu\\[8pt]
School of Mathematics and Information Science, Guangzhou University\\
Guangzhou 510006, Guangdong, P. R. China\\[8pt]
E-mail: fuhoushan@gzhu.edu.cn\\[15pt]
\end{center}
	
\begin{abstract}
We establish a family of linear relations for the coefficients of the Tutte polynomial of a semimatroid. We also introduce ranked central sets, a mild generalization of semimatroids, and prove that the same identities hold in this broader setting. The proof relies on the specialization $x=\frac{z}{z-1}$, $y=z$ along the hyperbola $(x-1)(y-1)=1$; under this specialization, the edge subgraph expansion over central sets reduces to the $f$-polynomial of the central set complex. As applications, we obtain explicit formulas for Tutte polynomials of affine hyperplane arrangements and for balanced Tutte polynomials of biased graphs, whose central sets are central subarrangements and balanced edge sets, respectively. Our results recover the generalized Brylawski's identities for Tutte polynomials of ranked sets, matroids and graphs.
\vspace{1ex}\\
\noindent{\bf Keywords:} Tutte polynomial, linear relation, semimatroid, ranked central set, affine hyperplane arrangement, biased graph
 \vspace{1ex}\\
{\bf Mathematics Subject Classifications:} 05C31, 05B35, 52C35
\end{abstract}
\section{Introduction}\label{Sec-1}
The study of graph polynomials originated with the chromatic polynomial, which was introduced by Birkhoff \cite{Birkhoff1912} in 1912 for planar graphs in connection with the four-color problem, and later generalized to arbitrary graphs by Whitney via edge subgraph expansions \cite{Whitney1932,Whitney1932-1}. Tutte's two-variable polynomial unified several graph polynomial invariants, including the chromatic polynomial and the flow polynomial, and has since become a central object in the study of graphs, matroids, hyperplane arrangements, and semimatroids. For a finite graph $G=(V,E)$, the {\em Tutte polynomial} admits the following edge subgraph expansion:
\[
T_G(x,y)=\sum_{A\subseteq E}(x-1)^{r(E)-r(A)}(y-1)^{|A|-r(A)},
\]
where $r$ denotes the rank function of $G$. By Tutte's original definition \cite{Tutte1954}, the coefficient of $x^iy^j$ in $T_G(x,y)$ counts the number of spanning forests with internal activity $i$ and external activity $j$, with respect to a fixed total ordering of the edges. This interpretation, now known as the {\em basis activities expansion}, shows that all coefficients of the Tutte polynomial are non-negative integers, and was subsequently generalized to matroids by Crapo \cite{Crapo1969}.

The purpose of this paper is to study generalized Brylawski's identities for Tutte polynomials. Brylawski \cite{BO1992} showed that a family of linear relations holds among the coefficients of the Tutte polynomial not only for graphs, but also for all matroids. More specifically, write the Tutte polynomial $T_M(x,y)$ of a matroid $M=(E,r)$ as $T_M(x,y)=\sum_{i,j\ge 0}t_{ij}x^iy^j$. Brylawski's identity states that, for any integer $0\le k<|E|$,
\[
\sum_{i=0}^{k}\sum_{j=0}^{k-i}\binom{k-i}{j}(-1)^jt_{ij}=0.
\]
In 2015, Gordon \cite{Gordon2015} extended this result by proving that the same relation holds for a much weaker notion of rank function $r:2^E\to\mathbb Z_{\ge0}$, which only requires 
\[
r(A)\leq \min\{r(E),|A|\} \quad \forall A\subseteq E,
\]
without the submodularity and monotonicity assumptions that hold for matroids. Moreover, Gordon also obtained a new identity for the case $k=|E|$:
\[
\sum_{i=0}^{k}\sum_{j=0}^{k-i}\binom{k-i}{j}(-1)^jt_{ij}=(-1)^{|E|-r(E)}.
\]
Most recently, Beke, Cs\'aji, Csikv\'ari and Pituk \cite{BKCP2023} gave a short proof of this result and extended the work of Brylawski and Gordon to all integers $k\ge0$:
\begin{equation}\label{GBI}
\sum_{i=0}^{k}\sum_{j=0}^{k-i}\binom{k-i}{j}(-1)^jt_{ij}=(-1)^{|E|-r(E)}\binom{k-r(E)}{k-|E|},
\end{equation}
where the binomial coefficient $\binom{k-r(E)}{k-|E|}$ is interpreted as $0$ when $k<|E|$. These relations are called the {\em generalized Brylawski identities}. The key observation in their proof is that one can simplify $T_M(x,y)$ by substituting $x=\frac{z}{z-1}$ and $y=z$ into the hyperbola relation $(x-1)(y-1)=1$. Consequently, 
\[
(z-1)^{r(E)}T_M\Big(\frac{z}{z-1},z\Big)=z^{|E|}.
\]
Comparing coefficients of $z^k$ and taking appropriate linear combinations of the resulting equations yields the generalized Brylawski's identities. This idea is also a key ingredient in obtaining our desired results.

Motivated by the earlier work, our main goal is to generalize the identities of Brylawski, Gordon and Beke et al. to Tutte polynomials of semimatroids. Ardila \cite{Ardila2007} extended the Tutte polynomial to semimatroids, which are abstract structures capturing the dependence properties of affine hyperplane arrangements and generalizing matroids. Semimatroids were introduced by Wachs and Walker \cite{Wachs-Walker1986} in terms of the ``geometric lattice'' of closed sets, later developed in terms of subsets of a finite set by Ardila \cite{Ardila2007}, and also discovered independently by Kawahara \cite{Kawahara2004}. 

Let $E$ be a finite set. A nonempty collection $\mathcal{C}$ of subsets of $E$ is called a \emph{simplicial complex} if whenever $A \in \mathcal{C}$, every subset of $A$ also belongs to $\mathcal{C}$. A {\em semimatroid} is a triple $(E,\mathcal{C},r_{\mathcal{C}})$ consisting of a finite set $E$, a nonempty simplicial complex $\mathcal{C}$ on $E$, and a function $r_\mathcal{C}:\mathcal{C}\to\mathbb{Z}_{\ge 0}$, satisfying the following five properties:
\begin{itemize}[leftmargin=1.4cm]
\item[{\rm (R1)}] If $A\in\mathcal{C}$, then $r_{\mathcal{C}}(A)\le|A|$.
\item[{\rm (R2)}] If $A,B\in\mathcal{C}$ and $A\subseteq B$, then $r_{\mathcal{C}}(A)\le r_{\mathcal{C}}(B)$.
\item[{\rm (R3)}] If $A,B\in\mathcal{C}$ and $A\cup B\in\mathcal{C}$, then 
\[
r_{\mathcal{C}}(A\cap B)+r_{\mathcal{C}}(A\cup B)\le r_{\mathcal{C}}(A)+r_{\mathcal{C}}(B).
\]
\item[{\rm (SR4)}] If $A,B\in\mathcal{C}$ and $r_{\mathcal{C}}(A)=r_{\mathcal{C}}(A\cap B)$, then $A\cup B\in\mathcal{C}$.
\item[{\rm (SR5)}] If $A,B\in\mathcal{C}$ and $r_{\mathcal{C}}(A)<r_{\mathcal{C}}(B)$, then $A\cup e\in\mathcal{C}$ for some $e\in B-A$.
\end{itemize}
Every member in $\mathcal{C}$ is referred to as a {\em central set} of the semimatroid $(E,\mathcal{C},r_{\mathcal{C}})$. $E$, $\mathcal{C}$ and $r_{\mathcal{C}}$ are called the {\em ground set}, the {\em collection of central sets} and the {\em rank function} of $(E,\mathcal{C},r_{\mathcal{C}})$, respectively. In addition, all the maximal sets in $\mathcal{C}$ have the same rank, which is denoted by $r(\mathcal{C})$ and called the {\em rank}  of $(E,\mathcal{C},r_{\mathcal{C}})$. In particular, when $\mathcal{C}=2^E$, the axioms (SR4) and (SR5) hold trivially, and the remaining three axioms (R1)--(R3) coincide with the rank function axioms of matroids. Hence, $(E,\mathcal{C},r_{\mathcal{C}})$ reduces to a matroid in this case. For more information about matroids, we refer the reader to Oxley's book \cite{Oxley2011}.

The {\em Tutte polynomial} of a semimatroid $(E,\mathcal{C},r_{\mathcal{C}})$, introduced by Ardila \cite[Definition 8.1]{Ardila2007}, is defined as
\[
T_{\mathcal{C}}(x,y):=\sum_{A\in\mathcal{C}}(x-1)^{r(\mathcal{C})-r_{\mathcal{C}}(A)}(y-1)^{|A|-r_{\mathcal{C}}(A)}.
\]
Ardila further showed that $T_{\mathcal{C}}(x,y)$ satisfies a deletion-contraction relation \cite[Proposition 8.2]{Ardila2007} and admits a basis activities expansion \cite[Theorem 9.5]{Ardila2007}. In particular, $T_{\mathcal{C}}(x,y)$ can be written in the form $T_{\mathcal{C}}(x,y) = \sum_{i\ge 0,\,j\ge 0} t_{ij} x^i y^j$, where each coefficient $t_{ij}$ is a non-negative integer counting bases with internal activity $i$ and external activity $j$. We now present our linear relations for Tutte polynomials of semimatroids.
\begin{theorem}[Generalized Brylawski Identities for Semimatroids]\label{Result1}
Let $(E,\mathcal{C},r_{\mathcal{C}})$ be a semimatroid, and $T_{\mathcal{C}}(x,y) = \sum_{i,j\ge 0} t_{ij} x^i y^j$. Then, for all integers $k\ge0$,
\[
\sum_{i=0}^{k}\sum_{j=0}^{k-i}\binom{k-i}{j}(-1)^jt_{ij}=\sum_{n=0}^{|E|}(-1)^{r(\mathcal{C})+n}f_n\binom{k+n-r(\mathcal{C})}{k},
\]
where $f_n:=\#\{X\in\mathcal{C}:|X|=n\}$. 
\end{theorem}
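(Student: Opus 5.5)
Substitute $x=\frac{z}{z-1}$, $y=z$ into the defining subset expansion. Since $x-1=\frac{1}{z-1}$ and $y-1=z-1$, each central set $A$ contributes
\[
(z-1)^{-(r(\mathcal{C})-r_{\mathcal{C}}(A))}(z-1)^{|A|-r_{\mathcal{C}}(A)}=(z-1)^{|A|-r(\mathcal{C})}.
\]
The rank function therefore disappears entirely, and we obtain
\[
T_{\mathcal{C}}\Big(\frac{z}{z-1},z\Big)=\sum_{n=0}^{|E|}f_n(z-1)^{n-r(\mathcal{C})},
\]
which depends only on the $f$-vector of the complex $\mathcal{C}$. The axioms of a semimatroid are not needed here beyond the definition of $r(\mathcal{C})$. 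This explains why the identity should extend to ranked central sets.

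The next step is to convert the left-hand side of the theorem into the coefficient of a power series. From the monomial expansion,
\[
T_{\mathcal{C}}\Big(\frac{z}{z-1},z\Big)=\sum_{i,j}t_{ij}z^{i+j}(z-1)^{-i}=\sum_{i,j}(-1)^it_{ij}z^{i+j}(1-z)^{-i}.
\]
I would multiply both sides by a suitable factor and expand as power series in $z$. The natural choice, following Beke et al., is to write $y^j=z^j=(1-(1-z))^j$ and $x^i=z^i(z-1)^{-i}$ and look for a generating function whose coefficient of $z^k$ is $\sum_{i\le k}\sum_{j\le k-i}\binom{k-i}{j}(-1)^jt_{ij}$. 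Concretely, the left side equals the coefficient of $w^k$ in $\sum_{i,j}t_{ij}w^i(1-w)^{-1}\bigl(-w/(1-w)\bigr)^j$. This holds because $\sum_{k\ge i}\binom{k-i}{j}w^k=w^{i+j}(1-w)^{-j-1}$.

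So the left side is $[w^k]\,\frac{1}{1-w}T\big(w,\frac{-w}{1-w}\big)$. With $x=w$ and $y=\frac{-w}{1-w}$ we get $(x-1)(y-1)=(w-1)\cdot\frac{-1}{1-w}=1$, so this is the same hyperbola with parameter $z=y=\frac{-w}{1-w}$. Then $z-1=\frac{-1}{1-w}$. Plugging into the formula above gives
\[
\frac{1}{1-w}\sum_n f_n(-1)^{n-r(\mathcal{C})}(1-w)^{r(\mathcal{C})-n}=\sum_n(-1)^{n+r(\mathcal{C})}f_n(1-w)^{r(\mathcal{C})-n-1}.
\]

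The last step is to extract the coefficient. We have $[w^k](1-w)^{-(n+1-r(\mathcal{C}))}=\binom{k+n-r(\mathcal{C})}{k}$, which matches the right-hand side term by term. The main obstacle is sign and convention bookkeeping when $n+1-r(\mathcal{C})\le 0$, where $(1-w)$ is raised to a nonnegative power. The identity $[w^k](1-w)^{-m}=\binom{k+m-1}{k}$ holds for all integers $m$ when the binomial coefficient is interpreted as a polynomial in its upper argument, $\binom{a}{k}=a(a-1)\cdots(a-k+1)/k!$. I would state this convention explicitly and check it against the matroid case, where it should reproduce \eqref{GBI}.
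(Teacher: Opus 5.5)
Your proof is correct. It shares the paper's key step (Lemma~\ref{HyperbolaLemma}: on the hyperbola each central set contributes $(z-1)^{|A|-r(\mathcal{C})}$, so the rank function drops out), but you extract the identity differently.

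The paper, with $r=r(\mathcal{C})$, compares coefficients of $z^l$ in $(z-1)^{r}T_{\mathcal{C}}\bigl(\frac{z}{z-1},z\bigr)=F_{\mathcal{C}}(z-1)$. It then takes the linear combination with weights $(-1)^l\binom{k-r}{k-l}$ and evaluates both sides by two applications of Chu--Vandermonde \eqref{Vandermonde}.

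You instead recognize that the generating function $\sum_{k\ge0}w^k\sum_{i\le k}\sum_{j\le k-i}\binom{k-i}{j}(-1)^jt_{ij}$ is itself a hyperbola evaluation, namely $\frac{1}{1-w}T_{\mathcal{C}}\bigl(w,\frac{-w}{1-w}\bigr)$, the hyperbola parametrization at $z=\frac{-w}{1-w}$. The single expansion $[w^k](1-w)^{-m}=\binom{k+m-1}{k}$, valid for every integer $m$ under the paper's convention, then finishes the argument. The substitution is unproblematic, because each summand of $T_{\mathcal{C}}\bigl(w,\frac{-w}{1-w}\bigr)$ becomes $(-1)^{r+|A|}(1-w)^{r-|A|}$ directly in $\mathbb{Q}[[w]]$.

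The two proofs are really the same computation. For any polynomial $G(z)=\sum_l g_lz^l$ one has $[w^k](1-w)^{r-1}G\bigl(\frac{-w}{1-w}\bigr)=\sum_{l=0}^{k}(-1)^l\binom{k-r}{k-l}g_l$, so the paper's weights are the coefficient-level shadow of your change of variable. Your route explains where those weights come from and dispenses with Vandermonde, as well as with the observation that $t_{ij}=0$ for $i>r$. The paper's route stays within finite polynomial identities and, along the way, records the relations \eqref{CoefficientEquation} for each individual $l$.
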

When $\mathcal{C} = 2^E$, the triple $(E,\mathcal{C},r_\mathcal{C})$ is a matroid. In this case, we have $f_n = \binom{|E|}{n}$ and $r(\mathcal{C}) = r(E)$, and hence
\begin{equation}\label{A=B}
\sum_{n=0}^{|E|}(-1)^{r(\mathcal{C})+n}f_n\binom{k+n-r(\mathcal{C})}{k}=(-1)^{|E|-r(E)}\binom{k-r(E)}{k-|E|}.
\end{equation}
Indeed,
\begin{align*}
\sum_{n=0}^{|E|}(-1)^{r(E)+n}\binom{|E|}{n}\binom{k+n-r(E)}{k}
&=
(-1)^{r(E)}[z^k](1+z)^{k-r(E)}
\sum_{n=0}^{|E|}\binom{|E|}{n}(-(1+z))^n  \\
&=
(-1)^{r(E)}[z^k](1+z)^{k-r(E)}(-z)^{|E|}  \\
&=
(-1)^{|E|-r(E)}\binom{k-r(E)}{k-|E|}.
\end{align*}
Thus, the generalized Brylawski identities for Tutte polynomials of matroids, given in \eqref{GBI}, can be recovered as a special case of \Cref{Result1}.

A further purpose of this paper is to generalize these linear relations to ranked central sets, following the spirit of Gordon's approach to isolate the key properties of semimatroids. This gives rise to a natural rank function generalization of semimatroids, and extends Gordon's original notion of ranked sets by allowing any simplicial subcomplex of the power set, rather than requiring the full power set itself.

A {\em ranked central set} is a triple $(E,\mathcal{C},r_{\mathcal{C}})$ consisting of a finite set $E$, a nonempty simplicial complex $\mathcal{C}$ on $E$, and a function $r_\mathcal{C}:\mathcal{C}\to\mathbb{Z}_{\ge 0}$, satisfying the following properties:
\begin{itemize}[leftmargin=1.4cm]
\item[{\rm (R0)}] $r_\mathcal{C}(\emptyset)=0$.
\item[{\rm (R1)}] $r_{\mathcal{C}}(A)\le|A|$ for any $A\in\mathcal{C}$.
\end{itemize}
The members in $\mathcal{C}$ are called the {\rm central sets} of the ranked central set $(E,\mathcal{C},r_{\mathcal{C}})$,  the function $r_\mathcal{C}$ is called its {\em rank function}, and
\[
r(\mathcal{C}):=\max\{r_\mathcal{C}(A):A\in\mathcal{C}\}
\]
is called its {\em rank}. 

Moreover, we define the {\em generalized Tutte polynomial} $T_\mathcal{C}(x,y)$ of the ranked central set  $(E,\mathcal{C},r_{\mathcal{C}})$ using the rank function, as follows:
\[
T_{\mathcal{C}}(x,y):=\sum_{A\in\mathcal{C}} (x-1)^{r(\mathcal{C})-r_{\mathcal{C}}(A)} (y-1)^{|A|-r_{\mathcal{C}}(A)}.
\]
The assumption (R1) and the definition of $r(\mathcal{C})$ guarantee that $T_{\mathcal{C}}(x,y)$ is a two-variable polynomial. Thus, we can write $T_{\mathcal{C}}(x,y)$ in the form $T_\mathcal{C}(x,y)=\sum_{i,j\ge 0}t_{ij}x^iy^j$. Unlike the usual Tutte polynomial, its coefficients $t_{ij}$ need not be non-negative. For instance, let $E=\{a,b\}$, $\mathcal{C}=\{\emptyset,\{a\},\{b\}\}$, and define $r_\mathcal{C}(\emptyset)=r_\mathcal{C}(\{a\})=0$ and $r_\mathcal{C}(\{b\})=1$. Then $(E,\mathcal{C},r_\mathcal{C})$ is a ranked central set of rank $r(\mathcal{C})=1$, and its generalized Tutte polynomial is
\[
T_\mathcal{C}(x,y) = (x-1)+(x-1)(y-1) +1=xy-y+1.
\]
Thus $t_{01}=-1$. The following theorem extends the generalized Brylawski identities given in \Cref{Result1} to the generalized Tutte polynomials of ranked central sets.

\begin{theorem}[Generalized Brylawski Identities for Ranked Central Sets]\label{Result2}
Let $(E,\mathcal{C},r_{\mathcal{C}})$ be a ranked central set, and $T_{\mathcal{C}}(x,y) = \sum_{i\ge 0,\,j\ge 0} t_{ij} x^i y^j$. Then, for all integers $k\ge0$,
\[
\sum_{i=0}^{k}\sum_{j=0}^{k-i}\binom{k-i}{j}(-1)^jt_{ij}=\sum_{n=0}^{|E|}(-1)^{r(\mathcal{C})+n}f_n\binom{k+n-r(\mathcal{C})}{k},
\]
where $f_n:=\#\{X\in\mathcal{C}:|X|=n\}$. 
\end{theorem}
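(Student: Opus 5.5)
Substituting $x=\frac{z}{z-1}$, $y=z$ into the defining sum of $T_{\mathcal{C}}$ turns the statement into a coefficient comparison between two expressions for the same rational function. Since $(x-1)(y-1)=1$ on this curve, each term $(x-1)^{r(\mathcal{C})-r_{\mathcal{C}}(A)}(y-1)^{|A|-r_{\mathcal{C}}(A)}$ equals $(z-1)^{-r(\mathcal{C})+|A|}$. Indeed, $x-1=\frac{1}{z-1}$, so the term is $(z-1)^{r_{\mathcal{C}}(A)-r(\mathcal{C})}(z-1)^{|A|-r_{\mathcal{C}}(A)}$. Hence
\[
(z-1)^{r(\mathcal{C})}\,T_{\mathcal{C}}\Big(\frac{z}{z-1},z\Big)=\sum_{A\in\mathcal{C}}(z-1)^{|A|}=\sum_{n=0}^{|E|}f_n(z-1)^n .
\]
The rank function has disappeared; only (R1) and the finiteness of $r(\mathcal{C})$ are used, to make $T_{\mathcal{C}}$ a polynomial. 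This is why the argument works for ranked central sets. Negative exponents of $z-1$ are harmless, since we work with rational functions.

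Next we compute the left side from the coefficients $t_{ij}$. We have
\[
(z-1)^{r(\mathcal{C})}\,T_{\mathcal{C}}\Big(\frac{z}{z-1},z\Big)=\sum_{i,j}t_{ij}\,z^{i+j}(z-1)^{r(\mathcal{C})-i}.
\]
Dividing both sides by $(z-1)^{r(\mathcal{C})}$ and substituting $z=1/(1-w)$, or working directly with expansions about $z=0$, looks messy. Instead we write
\[
\sum_{i,j}t_{ij}z^{i+j}(1-z)^{-i}=(-1)^{r(\mathcal{C})}\sum_{n}f_n(z-1)^{n-r(\mathcal{C})}
\]
as power series in $z$ around $0$. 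Both sides are then legitimate formal power series, because $(1-z)^{-1}$ is invertible.

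Extracting $[z^k]$ on the left gives
\[
\sum_{i,j}t_{ij}\binom{k-j-1}{k-i-j}\quad\text{(up to sign conventions)},
\]
which is not yet of the stated shape. So I would first apply a preliminary change of variables to match $\sum_{j}\binom{k-i}{j}(-1)^j$. The identity $\sum_{j}\binom{k-i}{j}(-1)^j t_{ij}$ is the coefficient of $u^k$ in $\sum t_{ij}u^i(1-u)^{?}$-type transforms. Concretely, $\sum_{i+j\le k}\binom{k-i}{j}(-1)^jt_{ij}=[u^k]\frac{1}{1-u}\sum_{i,j}t_{ij}u^{i}\big(\tfrac{-u}{1-u}\big)^j$. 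This follows by expanding $\frac{u^{i+j}}{(1-u)^{j+1}}$ and using $[u^{k}]\frac{u^{i+j}}{(1-u)^{j+1}}=\binom{k-i}{j}$.

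Thus the left side is $[u^k]\frac{1}{1-u}T_{\mathcal{C}}\big(u,\frac{-u}{1-u}\big)$. The substitution $x=u$, $y=\frac{-u}{1-u}$ satisfies $(x-1)(y-1)=(u-1)\cdot\frac{-1}{1-u}=1$, so it lies on the same hyperbola. It is the specialization with $z=\frac{-u}{1-u}$ and the roles of $x$ and $y$ swapped. We are therefore back in the situation of the first paragraph: $y-1=\frac{-1}{1-u}$ and $x-1=u-1$, so each term becomes $(u-1)^{r(\mathcal{C})-|A|}$. This gives
\[
\frac{1}{1-u}T_{\mathcal{C}}\Big(u,\frac{-u}{1-u}\Big)=\sum_n f_n(-1)^{r(\mathcal{C})-n}(1-u)^{n-r(\mathcal{C})-1}.
\]
Finally, $[u^k](1-u)^{-(r(\mathcal{C})+1-n)}=\binom{k+r(\mathcal{C})-n}{k}$. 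This has the wrong sign pattern relative to the claimed $\binom{k+n-r(\mathcal{C})}{k}$. So the main obstacle is getting the substitution orientation right: I expect the correct choice to be the one with $x-1=\frac{1}{z-1}$ placed so that the powers come out as $(1-u)^{-(n-r+1)}$. This probably means the generating function should be $\sum t_{ij}\frac{u^i(-u)^j}{(1-u)^{?}}$ with $x$ and $y$ interchanged. I would settle this by checking the matroid case~\eqref{A=B} and the toy example with $T=xy-y+1$.

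Once the generating-function identity $\sum_k(\text{LHS}_k)u^k=G(u)$ is confirmed, extracting coefficients with $[u^k](1-u)^{-m-1}=\binom{k+m}{k}$ gives the theorem. Here we must interpret $\binom{k+m}{k}$ as a polynomial in $m$, so that negative $m=n-r(\mathcal{C})$ behaves correctly. That is automatic from the generalized binomial series.
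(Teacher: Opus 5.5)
Your strategy is sound, and it would prove the theorem. The proposal stalls only because of a sign slip in the last simplification, which you then misdiagnose. You correctly show that $\sum_{k\ge0}\big(\sum_{i+j\le k}\binom{k-i}{j}(-1)^jt_{ij}\big)u^k=\frac{1}{1-u}T_{\mathcal{C}}\big(u,\frac{u}{u-1}\big)$. You also correctly find that each central set $A$ contributes $(u-1)^{r-|A|}$, where $r=r(\mathcal{C})$.

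The slip is the next line: $\frac{(u-1)^{r-n}}{1-u}=(-1)^{r-n}(1-u)^{r-n-1}=(-1)^{r+n}(1-u)^{-(n-r)-1}$, not $(-1)^{r-n}(1-u)^{n-r-1}$ as you wrote. With the correct exponent, $[u^k](1-u)^{-(n-r)-1}=\binom{k+n-r}{k}$ for every integer $n-r$. This follows from upper negation, $(-1)^k\binom{-m-1}{k}=\binom{k+m}{k}$, which is an identity of polynomials in $m$. Summing over $n$ then gives exactly the right-hand side of the theorem.

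So there is no orientation problem, and your proposed remedy of interchanging $x$ and $y$ would be wrong. Your own computation $[u^k]\frac{u^{i+j}}{(1-u)^{j+1}}=\binom{k-i}{j}$ pins the generating function to the point $x=u$, $y=\frac{u}{u-1}$. Evaluating at the swapped point generates a different sequence, because the left-hand side is not symmetric in $(i,j)$.

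Nor is this point the paper's specialization with $x$ and $y$ swapped. Setting $z=\frac{u}{u-1}$ gives $\frac{z}{z-1}=u$, so it is literally $x=\frac{z}{z-1}$, $y=z$ reparametrized. The toy check you suggested would have located the slip: for $T_{\mathcal{C}}=xy-y+1$ one gets $\frac{1}{1-u}T_{\mathcal{C}}(u,\frac{u}{u-1})=\frac{1+u}{1-u}$, while your displayed formula gives $\frac{1-2u}{(1-u)^2}$. As written, however, the argument ends at an unresolved and misattributed discrepancy, so it is not yet a proof.

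Once that line is corrected, your route differs from the paper's in presentation rather than substance. The paper compares $[z^l]$ in the polynomial identity $\sum t_{ij}z^{i+j}(z-1)^{r-i}=F_{\mathcal{C}}(z-1)$, multiplies by $(-1)^l\binom{k-r}{k-l}$, sums over $l\le k$, and applies Chu--Vandermonde on each side. Those weights are exactly the coefficient-level form of your substitution: $\sum_k u^k\sum_l(-1)^l\binom{k-r}{k-l}[z^l]P(z)=(1-u)^{r-1}P\big(\frac{u}{u-1}\big)$.

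Your version handles all $k$ at once. Both Vandermonde convolutions collapse into single coefficient extractions from powers of $1-u$. The cost is working in $\mathbb{Q}[[u]]$ with negative powers of $1-u$, which is harmless since $1-u$ is a unit there, whereas the paper stays with polynomials in $z$. The abandoned detour through $[z^k]$ of $\sum t_{ij}z^{i+j}(1-z)^{-i}$ can simply be deleted.
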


It is worth noting that when $\mathcal{C} = 2^E$ and $r(\mathcal{C})=r(E)$, the triple $(E,\mathcal{C},r_\mathcal{C})$ is a ranked set in the sense of Gordon, and the equality in \eqref{A=B} still holds. Therefore, the generalized Brylawski identities for Tutte polynomials of ranked sets, given in \eqref{GBI}, can be recovered as a special case of \Cref{Result2}. In addition, a semimatroid is also a ranked central set, and hence we can directly obtain \Cref{Result1} from \Cref{Result2}.

The paper is organized as follows. In \autoref{Sec2}, we prove \Cref{Result2}, deduce \Cref{Result1}, and examine some basic properties of the coefficients $t_{ij}$. In \autoref{Sec3}, we apply \Cref{Result1} to affine hyperplane arrangements and biased graphs.
\section{Proof of  Theorem \ref{Result2} and further consequences}\label{Sec2}
In this section, we prove \Cref{Result2}, deduce \Cref{Result1}, and then examine some basic questions associated with the coefficients $t_{ij}$.
\subsection{Proofs of  Theorems \ref{Result1} and \ref{Result2}}\label{Sec2-1}
In this subsection, we prove \Cref{Result2}. Since every semimatroid is a ranked central set, \Cref{Result1} follows immediately. Let $a,b$ and $m$ be integers. Throughout this section we use the convention
\[
\binom{a}{b}=\frac{a(a-1)\cdots(a-b+1)}{b!}\quad \text{for } b\ge0,
\qquad
\binom{a}{b}=0\quad \text{for } b<0.
\]
We shall also use the Chu--Vandermonde identity \cite{Vandermonde1972,Zhu1933} in the form
\begin{equation}\label{Vandermonde}
\sum_{l\in\mathbb{Z}}\binom{a}{m-l}\binom{b}{l}=\binom{a+b}{m}.
\end{equation}

Let $(E,\mathcal{C},r_{\mathcal{C}})$ be a ranked central set. We consider the polynomial $F_{\mathcal C}(t)$ as the ordinary generating function for the central sets of $\mathcal C$ with respect to cardinality. Equivalently, it is the $f$-polynomial of the simplicial complex $\mathcal C$, where $f_n$ denotes the number of central sets (faces) of size $n$:
\[
F_{\mathcal{C}}(t):=\sum_{A\in\mathcal{C}}t^{|A|}=\sum_{n=0}^{|E|}f_nt^n.
\]
Notice that this polynomial depends only on the underlying simplicial complex  $\mathcal C$, and not on the rank function $r_{\mathcal C}$. Below we simplify the generalized Tutte polynomial $T_\mathcal{C}(x,y)$ using $F_{\mathcal{C}}(t)$ and the method of Beke et al.
\begin{lemma}\label{HyperbolaLemma}
For every ranked central set $(E,\mathcal{C},r_{\mathcal{C}})$, we have
\[
(z-1)^{r(\mathcal{C})}T_{\mathcal{C}}\Big(\frac{z}{z-1},z\Big)=F_{\mathcal{C}}(z-1).
\]
\end{lemma}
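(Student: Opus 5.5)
The identity follows by substituting directly into the defining sum and simplifying each summand separately. With $x=\frac{z}{z-1}$ and $y=z$ we have
\[
x-1=\frac{1}{z-1},\qquad y-1=z-1,
\]
so the specialization lies on the hyperbola $(x-1)(y-1)=1$. The summand indexed by $A\in\mathcal{C}$ becomes
\[
(z-1)^{-(r(\mathcal{C})-r_{\mathcal{C}}(A))}(z-1)^{|A|-r_{\mathcal{C}}(A)}=(z-1)^{|A|-r(\mathcal{C})}.
\]
The two appearances of the rank $r_{\mathcal{C}}(A)$ cancel. Multiplying by $(z-1)^{r(\mathcal{C})}$ turns each summand into $(z-1)^{|A|}$. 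Summing over $A\in\mathcal{C}$ gives $\sum_{A\in\mathcal{C}}(z-1)^{|A|}=F_{\mathcal{C}}(z-1)$, which is the claim.

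The only point needing care is that this is an identity of rational functions, or of polynomials after clearing denominators. Because of (R1) and the definition of $r(\mathcal{C})$ as a maximum, both exponents $r(\mathcal{C})-r_{\mathcal{C}}(A)$ and $|A|-r_{\mathcal{C}}(A)$ are nonnegative. Hence $T_{\mathcal{C}}$ is a genuine polynomial, and the substitution produces a rational function in $z$ with only possible pole at $z=1$. After multiplying by $(z-1)^{r(\mathcal{C})}$, each term $(z-1)^{|A|}$ is a polynomial, so the equality holds as polynomials in $z$. This is what we will need for extracting coefficients of $z^k$ later.

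No step is a real obstacle; the argument is purely formal. I would present it as a single displayed chain of equalities, noting that (R0) is not needed here.
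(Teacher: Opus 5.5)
Your proposal is correct and is the same argument as the paper's: substitute $x=\frac{z}{z-1}$, $y=z$; the $r_{\mathcal{C}}(A)$ exponents cancel, so each summand becomes $(z-1)^{|A|-r(\mathcal{C})}$; then multiply by $(z-1)^{r(\mathcal{C})}$ and sum over $\mathcal{C}$. Your added remark that the equality holds as polynomials in $z$, by (R1) and the maximality of $r(\mathcal{C})$, is a harmless addition; the paper leaves it out of the lemma and only invokes it later, when comparing coefficients.
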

\begin{proof}
Let us consider a new variable $z$, and plug in $x =\frac{z}{z-1}$ and $y=z$. Then the contribution of a central set $A\in\mathcal{C}$ to $T_{\mathcal{C}}\left(\frac{z}{z-1},z\right)$ is
\[
(z-1)^{-r(\mathcal{C})+r_{\mathcal{C}}(A)}(z-1)^{|A|-r_{\mathcal{C}}(A)}=(z-1)^{|A|-r(\mathcal{C})}.
\]
Multiplying by $(z-1)^{r(\mathcal{C})}$ and summing over all $A\in\mathcal{C}$ gives \Cref{HyperbolaLemma}.
\end{proof}
We now proceed to verify \Cref{Result2}.
\begin{proof}[Proof of \Cref{Result2}]
Write $r=r(\mathcal{C})$ for brevity. Substituting $x=\frac{z}{z-1}$ and $y=z$ into $T_\mathcal{C}(x,y)=\sum_{i,j\ge 0}t_{ij}x^iy^j$, and applying \Cref{HyperbolaLemma}, we obtain
\begin{equation}\label{KeyA=B}
\sum_{i,j\ge 0}t_{ij}z^{i+j}(z-1)^{r-i}=F_{\mathcal{C}}(z-1).
\end{equation}
Since $0\le r_\mathcal{C}(A)\le\min\{r,|A|\}$ for any $A\in\mathcal{C}$, we have $t_{ij}=0$ whenever $i>r$. Therefore, both sides in \eqref{KeyA=B} are polynomials in $z$. Thus, comparing the coefficients of $z^l$ in both sides of \eqref{KeyA=B} gives
\begin{equation}\label{CoefficientEquation}
\sum_{i,j\ge 0}t_{ij}(-1)^{r-l+j}\binom{r-i}{l-i-j}=[z^l]F_{\mathcal{C}}(z-1).
\end{equation}
Multiplying \eqref{CoefficientEquation} by $(-1)^l\binom{k-r}{k-l}$ and summing over $l=0,\ldots,k$, the left-hand side in \eqref{KeyA=B} becomes
\[
\sum_{i,j\ge 0}t_{ij}(-1)^{r+j}\sum_{l=0}^{k}\binom{k-r}{k-l}\binom{r-i}{l-i-j}.
\]
By \eqref{Vandermonde}, we have
\[
\sum_{l=0}^{k}\binom{k-r}{k-l}\binom{r-i}{l-i-j}=\binom{k-i}{k-i-j}.
\]
This term is zero unless $0\leq i\leq k$ and $0\leq j\leq k-i$; in that range it is equal to $\binom{k-i}{j}$. Hence, the transformed left-hand side in \eqref{KeyA=B} is
\[
(-1)^r\sum_{i=0}^{k}\sum_{j=0}^{k-i}\binom{k-i}{j}(-1)^jt_{ij}.
\]
For the right-hand side in \eqref{KeyA=B}, since $[z^l]F_{\mathcal{C}}(z-1)=\sum_{A\in\mathcal{C}}(-1)^{|A|-l}\binom{|A|}{l}$, we derive
\[
\sum_{l=0}^{k}(-1)^l\binom{k-r}{k-l}\big([z^l]F_{\mathcal{C}}(z-1)\big)=\sum_{A\in\mathcal{C}}(-1)^{|A|}\sum_{l=0}^{k}\binom{k-r}{k-l}\binom{|A|}{l}.
\]
Applying \eqref{Vandermonde} again yields
\[
\sum_{l=0}^{k}\binom{k-r}{k-l}\binom{|A|}{l}=\binom{k+|A|-r}{k}.
\]
Therefore,
\[
\sum_{l=0}^{k}(-1)^l\binom{k-r}{k-l}\big([z^l]F_{\mathcal{C}}(z-1)\big)=\sum_{A\in\mathcal{C}}(-1)^{|A|}\binom{k+|A|-r}{k}.
\]
Multiplying by $(-1)^r$, we get
\[
\sum_{i=0}^{k}\sum_{j=0}^{k-i}
\binom{k-i}{j}(-1)^jt_{ij}
=
\sum_{A\in\mathcal{C}}
(-1)^{r+|A|}
\binom{k+|A|-r}{k}.
\]
Grouping the central sets according to their cardinalities gives
\[
\sum_{i=0}^{k}\sum_{j=0}^{k-i}
\binom{k-i}{j}(-1)^jt_{ij}
=
\sum_{n=0}^{|E|}(-1)^{r+n}f_n\binom{k+n-r}{k},
\]
which completes the proof.
\end{proof}

\subsection{Further consequences}\label{Sec2-2}
In this subsection, we restrict our attention to some basic properties of the coefficients $t_{ij}$. It is natural to ask whether the sum $\sum_{i=0}^{k}\sum_{j=0}^{k-i}\binom{k-i}{j}(-1)^jt_{ij}$ given in \Cref{Result2} is constant for all sufficiently large integers $k$. The following corollary answers this question.
\begin{corollary}\label{Constant}
Let $(E,\mathcal{C},r_{\mathcal{C}})$ be a ranked central set and set  $r=r(\mathcal{C})$. Then, for all $k\ge r$, if $f_n=0$ for all integers $r<n\le|E|$, then the sum $\sum_{i=0}^{k}\sum_{j=0}^{k-i}\binom{k-i}{j}(-1)^jt_{ij}$ is constant and equal to $f_r$.
\end{corollary}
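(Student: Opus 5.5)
We will derive this directly from \Cref{Result2}. Under the hypothesis $f_n=0$ for $r<n\le|E|$, the right-hand side of \Cref{Result2} reduces to
\[
\sum_{n=0}^{r}(-1)^{r+n}f_n\binom{k+n-r}{k}.
\]
The plan is to show that for $k\ge r$ every term with $n<r$ vanishes, leaving only the term $n=r$.

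For $n<r$, set $a=k+n-r$. Since $k\ge r>n\ge 0$, we have $0\le a<k$. With the falling-factorial convention fixed at the start of \Cref{Sec2-1}, $\binom{a}{k}=a(a-1)\cdots(a-k+1)/k!$. The factors run from $a$ down to $a-k+1\le 0$, so one factor is $0$ and $\binom{a}{k}=0$. This step is routine. The only care needed is that $a\ge 0$, which holds because $k\ge r$ and $n\ge 0$. If $a$ were negative the binomial would be nonzero, which is why the hypothesis $k\ge r$ is needed.

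The term $n=r$ is $(-1)^{2r}f_r\binom{k}{k}=f_r$. Hence the left-hand side equals $f_r$ for every $k\ge r$, so it is independent of $k$.

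No step is a real obstacle. The one subtle point is applying the stated binomial convention correctly to nonnegative upper indices smaller than the lower index. It is also worth remarking, though not needed, that the vanishing hypothesis is automatic whenever every central set of size greater than $r$ is excluded. That is the situation where the top-degree contributions would otherwise grow polynomially in $k$.
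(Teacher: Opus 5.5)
Your argument is correct and is essentially the paper's proof: for $k\ge r$ and $n<r$ you have $0\le k+n-r<k$, so those terms vanish, and under the hypothesis only the $n=r$ term $f_r\binom{k}{k}=f_r$ survives. The paper adds a degree argument in $k$ showing the converse, namely that the sum is constant only when $f_n=0$ for all $n>r$; the statement as written does not require that.
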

\begin{proof}
We first look at the right-hand side $T_\mathcal{C}(k):=\sum_{n=0}^{|E|} (-1)^{r+n} f_n \binom{k+n-r}{k}$ of \Cref{Result2}. If $k \geq r$ and $n < r$, then $0 \leq k + n - r < k$, and hence $\binom{k+n-r}{k} = 0$. Thus all terms with $n < r$ disappear for $k \geq r$. For $n \geq r$, applying the identity $\binom{k+n-r}{k} = \binom{k+n-r}{n-r}$, we derive that for all $k \geq r$,
\[
T_{\mathcal{C}}(k) = \sum_{n=r}^{|E|} (-1)^{r+n} f_n \binom{k+n-r}{n-r}.
\]
Thus, $T_\mathcal{C}(k)$ is a polynomial in $k$. Let $\rho:=\max\{n:f_n\ne 0\}$. The term with $n=\rho$ is $(-1)^{r+\rho}f_{\rho}\binom{k+\rho-r}{\rho-r}$.
If $\rho>r$, then this term has degree $\rho-r$ and leading coefficient $(-1)^{r+\rho}\frac{f_{\rho}}{(\rho-r)!}\ne 0$. All other terms have smaller degree. Hence the polynomial $T_{\mathcal{C}}(k)$ has degree $\rho-r$. It follows that the polynomial $T_{\mathcal{C}}(k)$ is constant exactly when $\rho=r$, equivalently when $f_n=0$ for all $n>r$. In that case only the term $n=r$ remains, and the constant is $f_r\binom{k}{0}=f_r$. Applying \Cref{Result2}, the corollary holds.
\end{proof}

Let $(E,\mathcal{C},r_{\mathcal{C}})$ be a semimatroid. For any $A\in\mathcal{C}$, its {\em closure} is
\[
{\rm cl}_\mathcal{C}(A)=\big\{e\in E: A\cup\{e\}\in\mathcal{C}, r_\mathcal{C}(A\cup\{e\})=r_\mathcal{C}(A)\big\}.
\]
A central set $A\in\mathcal{C}$ is {\em independent} if $r_{\mathcal{C}}(A)=|A|$, and is called a {\em spanning set} if $r_{\mathcal{C}}(A)=r(\mathcal{C})$. A maximal independent set is referred to as a {\em basis}. Some evaluations of semimatroid Tutte polynomials have combinatorial interpretations as follows:
\begin{itemize}
\item $T_\mathcal{C}(1,1)$ is the number of bases.
\item $T_\mathcal{C}(2,1)$ is the number of independent sets.
\item $T_\mathcal{C}(1,2)$ is the number of spanning sets.
\item $T_\mathcal{C}(2,2)$ is the number of central sets.
\end{itemize}

Note that matroids and semimatroids are examples of ranked central sets. As a byproduct of \Cref{Constant}, we obtain the following result:
\begin{itemize}
\item If every central set in a semimatroid $(E,\mathcal{C},r_{\mathcal{C}})$ is independent, then for all $k\ge r$, the sum $\sum\limits_{i=0}^{k}\sum\limits_{j=0}^{k-i}\binom{k-i}{j}(-1)^jt_{ij}$ is constant and equal to $T_\mathcal{C}(1,1)$.
\item If $M=U_{r,r}$ is the uniform matroid of size $r$ and rank $r$, then for all $k\ge r$, the sum $\sum\limits_{i=0}^{k}\sum\limits_{j=0}^{k-i}\binom{k-i}{j}(-1)^jt_{ij}$ is constant and equal to $T_\mathcal{C}(1,1)$.
\end{itemize}

We conclude this section with some combinatorial interpretations for the coefficient $t_{00}$. The {\em characteristic polynomial} $\chi_\mathcal{C}(t)$ of a semimatroid $(E,\mathcal{C},r_{\mathcal{C}})$ is a special case of the Tutte polynomial $T_\mathcal{C}(x,y)$, defined by
\[
\chi_\mathcal{C}(t):=(-1)^{r(\mathcal{C})}T_\mathcal{C}(1-t,0)=\sum_{A\in\mathcal{C}}(-1)^{|A|}t^{r(\mathcal{C})-r_\mathcal{C}(A)}.
\]
According to \Cref{Result1}, the first identity is
\[
t_{00}=\sum_{n=0}^{|E|}(-1)^{r(\mathcal{C})+n}f_n.
\]
Comparing the expressions of $t_{00}$ and $\chi_\mathcal{C}(t)$, we immediately deduce
\[
t_{00}=(-1)^{r(\mathcal{C})}\chi_\mathcal{C}(1).
\] 
Additionally, the coefficient $t_{00}$ is closely related to an important combinatorial invariant, Crapo's beta invariant. For a matroid $M=(E,r)$, the {\em Crapo's beta invariant} $\beta(M)$ was introduced by Crapo \cite{Crapo1967} and is given by
\[
\beta(M):=(-1)^{r(E)}\sum_{A\subseteq E}(-1)^{|A|}r(A).
\]
It can also be computed from the {\em characteristic polynomial} $\chi_M(t)$ of $M$, defined as
\[
\chi_M(t):=\sum_{A\subseteq E}(-1)^{|A|}t^{r(E)-r(A)},
\]
and the relation between them is 
\[
\beta(M)=(-1)^{r(E)+1}\left.\frac{d \chi_M(t)}{dt}\right|_{t=1}=(-1)^{r(E)+1}\chi'_M(1).
\]
For further details on Crapo's beta invariant, we refer the reader to \cite{Oxley1982}. 

In the study of semimatroids, there are many equivalent ways to define a semimatroid. One can define a semimatroid in terms of a matroid $\tilde{N}$ on the ground set $E\sqcup\{p\}$ with a distinguished element $p$. The pair $(\tilde{N},p)$ is called a {\em pointed matroid}. Ardila \cite[Theorem 5.4]{Ardila2007} showed that an arbitrary pointed matroid $(\tilde{N},p)$ on $E\sqcup\{p\}$ defines a unique semimatroid $(E,\mathcal{C},r_{\mathcal{C}})$ such that 
\[
\mathcal{C}=\big\{A\subseteq E\mid p\notin \operatorname{cl}_{\tilde{N}}(A)\big\}
\]
and $r_{\mathcal{C}}$ is the restriction of $r_{\tilde{N}}$ to $\mathcal{C}$. This correspondence was implicit in the work of Wachs and Walker \cite{Wachs-Walker1986}. Based on this result, Ardila \cite[Proposition 8.7]{Ardila2007} further obtained a close relation between the characteristic polynomials of the semimatroid and the pointed matroid:
\[
\chi_{\tilde{N}}(t)=(t-1)\chi_\mathcal{C}(t).
\]
Furthermore, a routine calculation gives
\[
t_{00}=\beta(\tilde{N}).
\]
Summarizing the above arguments, we obtain the following theorem.
\begin{theorem}\label{Beta-Semi}
Let  $(\tilde N,p)$ be a pointed matroid, and $(E,\mathcal{C},r_\mathcal{C})$ be its corresponding semimatroid. Then
\[
t_{00}=(-1)^{r(\mathcal{C})}\chi_\mathcal{C}(1)=\beta(\tilde{N}).
\]
\end{theorem}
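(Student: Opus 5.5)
The theorem has two equalities, and I will prove them separately.

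\emph{First equality.} Take $k=0$ in \Cref{Result1}. The left-hand side reduces to $t_{00}$. On the right-hand side, $\binom{n-r(\mathcal{C})}{0}=1$ for every $n$, so
\[
t_{00}=\sum_{n}(-1)^{r(\mathcal{C})+n}f_n=(-1)^{r(\mathcal{C})}\sum_{A\in\mathcal{C}}(-1)^{|A|}.
\]
Setting $t=1$ in the expansion $\chi_\mathcal{C}(t)=\sum_{A\in\mathcal{C}}(-1)^{|A|}t^{r(\mathcal{C})-r_\mathcal{C}(A)}$ gives exactly $\chi_\mathcal{C}(1)=\sum_{A\in\mathcal{C}}(-1)^{|A|}$, which proves $t_{00}=(-1)^{r(\mathcal{C})}\chi_\mathcal{C}(1)$.

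\emph{Second equality.} I will use Ardila's relation $\chi_{\tilde N}(t)=(t-1)\chi_\mathcal{C}(t)$ together with the formula $\beta(\tilde N)=(-1)^{r(\tilde N)+1}\chi'_{\tilde N}(1)$.

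Differentiating the product gives $\chi'_{\tilde N}(t)=\chi_\mathcal{C}(t)+(t-1)\chi'_\mathcal{C}(t)$, so $\chi'_{\tilde N}(1)=\chi_\mathcal{C}(1)$. Hence
\[
\beta(\tilde N)=(-1)^{r(\tilde N)+1}\chi_\mathcal{C}(1).
\]
It remains to show $(-1)^{r(\tilde N)+1}=(-1)^{r(\mathcal{C})}$, i.e.\ $r(\tilde N)=r(\mathcal{C})+1$.

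\emph{The rank condition (the main step).} The identity $r(\tilde N)=r(\mathcal{C})+1$ is not automatic; it requires $p$ to be neither a loop nor a coloop of $\tilde N$.

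First I rule out $p$ being a loop. If $p$ were a loop, then $p\in\operatorname{cl}_{\tilde N}(\emptyset)$, so $\emptyset\notin\mathcal{C}$. This contradicts the requirement that $\mathcal{C}$ be a nonempty simplicial complex, which must contain $\emptyset$. So $p$ is not a loop.

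Next I compare ranks. A maximal central set $A$ satisfies $p\notin\operatorname{cl}_{\tilde N}(A)$, so $r_{\tilde N}(A\cup p)=r_{\mathcal{C}}(A)+1$. This gives $r(\tilde N)\ge r(\mathcal{C})+1$.

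For the reverse inequality, take a basis $B$ of $\tilde N$ containing $p$; this is possible because $p$ is not a loop. The set $B-p$ is independent and does not span $p$, so $B-p\in\mathcal{C}$. Therefore $r(\mathcal{C})\ge r(\tilde N)-1$, and combining the two inequalities gives $r(\tilde N)=r(\mathcal{C})+1$.

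The coloop case is covered by this same argument, and the formulas are consistent there. If $p$ is a coloop, then $\chi_{\tilde N}\equiv0$, so $\beta(\tilde N)=0$. At the same time $\chi_\mathcal{C}(1)=\sum_{A\subseteq E}(-1)^{|A|}=0$ when $E\neq\emptyset$, so both sides vanish.

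Substituting $r(\tilde N)=r(\mathcal{C})+1$ into the expression for $\beta(\tilde N)$ gives $\beta(\tilde N)=(-1)^{r(\mathcal{C})}\chi_\mathcal{C}(1)=t_{00}$, which completes the proof.
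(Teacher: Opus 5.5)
Your proof is correct and follows the paper's route: $k=0$ in Theorem~\ref{Result1} gives the first equality, and Ardila's relation $\chi_{\tilde N}(t)=(t-1)\chi_{\mathcal{C}}(t)$ combined with $\beta(\tilde N)=(-1)^{r(\tilde N)+1}\chi'_{\tilde N}(1)$ gives the second, and your argument for $r(\tilde N)=r(\mathcal{C})+1$ usefully makes explicit what the paper's ``routine calculation'' leaves implicit. One minor correction: that rank identity needs only that $p$ is not a loop (your argument works verbatim when $p$ is a coloop), and your aside that a coloop $p$ forces $\chi_{\tilde N}\equiv 0$ is false---only a loop does that, whereas for a coloop $\chi_{\tilde N}(t)=(t-1)\chi_{\tilde N\setminus p}(t)$---although the aside is unnecessary and its conclusion $\beta(\tilde N)=0$ for $E\neq\emptyset$ is still correct.
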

\section{Applications to hyperplane arrangements and biased graphs}\label{Sec3}
In this section, we apply our results to two concrete semimatroid models: affine hyperplane arrangements and biased graphs. We also record the resulting interpretations of the coefficient $t_{00}$.
\subsection{Application to affine hyperplane arrangements}\label{Sec3-1}
An {\em affine hyperplane arrangement} $\mathcal{A}$ is a finite set of affine hyperplanes in a $d$-dimensional vector space $V$ over a field $\mathbb{F}$. A subarrangement $\mathcal{B}\subseteq\mathcal{A}$ is called {\em central} if $\bigcap_{H\in\mathcal{B}}H\ne\emptyset$. Let $\mathcal{C}_{\mathcal{A}}$ denote the collection of central subarrangements of $\mathcal{A}$. For any $\mathcal{B}\in \mathcal{C}_{\mathcal{A}}$, define 
\[
r_{\mathcal{A}}(\mathcal{B}):={\rm codim}\bigcap_{H\in\mathcal{B}}H=d-\dim \bigcap_{H\in\mathcal{B}}H.
\]
As noted in \cite[Proposition 2.2]{Ardila2007}, the triple $(\mathcal{A},\mathcal{C}_{\mathcal{A}},r_{\mathcal{A}})$ forms a semimatroid, where $\mathcal{C}_{\mathcal{A}}$ is the corresponding collection of central sets and $r_\mathcal{A}$ is the corresponding rank function. Its {\em Tutte polynomial} is given by
\[
T_{\mathcal{A}}(x,y):=\sum_{\mathcal{B}\in\mathcal{C}_{\mathcal{A}}}(x-1)^{r(\mathcal{A})-r_{\mathcal{A}}(\mathcal{B})}(y-1)^{|\mathcal{B}|-r_{\mathcal{A}}(\mathcal{B})},
\]
where $r(\mathcal{A})=r(\mathcal{C}_{\mathcal{A}})$. The {\em characteristic polynomial} $\chi_\mathcal{A}(t)$ of $\mathcal{A}$ is a special case of $T_\mathcal{A}(x,y)$, defined as
\[
\chi_{\mathcal{A}}(t):=\sum_{\mathcal{B}\in\mathcal{C}_{\mathcal{A}}}(-1)^{|\mathcal{B}|}t^{\dim\bigcap_{H\in\mathcal{B}}H}=(-1)^{r(\mathcal{A})}t^{d-r(\mathcal{A})}T_\mathcal{A}(1-t,0).
\]

Specializing \Cref{Result1} to affine hyperplane arrangements gives the following corollary.
\begin{corollary}[Generalized Brylawski Identities for Affine Hyperplane Arrangements]\label{CorArrangement}
Let $\mathcal{A}$ be an affine hyperplane arrangement, and  $T_{\mathcal{A}}(x,y)=\sum_{i,j\geq0}t_{ij}x^iy^j$. Then, for all integers $k\ge0$,
\[
\sum_{i=0}^{k}\sum_{j=0}^{k-i}\binom{k-i}{j}(-1)^jt_{ij}=\sum_{n=0}^{|\mathcal{A}|}(-1)^{r(\mathcal{A})+n}c_n\binom{k+n-r(\mathcal{A})}{k},
\]
where $c_n:=\#\{\mathcal{B}\in\mathcal{C}_{\mathcal{A}}:|\mathcal{B}|=n\}$.
\end{corollary}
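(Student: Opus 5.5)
This is a direct specialization of \Cref{Result1}, so the plan is simply to check that the data of an affine hyperplane arrangement fit its hypotheses and to match up the notation. First, by \cite[Proposition 2.2]{Ardila2007}, the triple $(\mathcal{A},\mathcal{C}_{\mathcal{A}},r_{\mathcal{A}})$ is a semimatroid on the finite ground set $E=\mathcal{A}$. Its collection of central sets $\mathcal{C}_{\mathcal{A}}$ is nonempty and closed under subsets. Indeed, $\emptyset$ is central, since the empty intersection is $V\neq\emptyset$. Also, any subarrangement of a central subarrangement is central, because its intersection contains the larger intersection.

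Next, I will identify the invariants. The rank of the semimatroid is $r(\mathcal{C}_{\mathcal{A}})$, which is denoted $r(\mathcal{A})$ by definition. The Tutte polynomial $T_{\mathcal{A}}(x,y)$ displayed above is, term by term, Ardila's semimatroid Tutte polynomial $T_{\mathcal{C}_{\mathcal{A}}}(x,y)$. Hence the coefficients $t_{ij}$ in the corollary are exactly the coefficients $t_{ij}$ appearing in \Cref{Result1}. Finally, the face numbers $f_n=\#\{X\in\mathcal{C}_{\mathcal{A}}:|X|=n\}$ of the central set complex are precisely $c_n$, the number of central subarrangements of size $n$. The upper summation limit $|E|$ in \Cref{Result1} becomes $|\mathcal{A}|$.

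Substituting these identifications into \Cref{Result1} yields the stated identity for every $k\ge 0$.

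There is essentially no obstacle. The only point needing care is that the semimatroid axioms hold for $(\mathcal{A},\mathcal{C}_{\mathcal{A}},r_{\mathcal{A}})$, and this is taken from Ardila. Even that dependence could be bypassed: \Cref{Result2} only requires (R0) and (R1), which are immediate here. We have $r_{\mathcal{A}}(\emptyset)=\operatorname{codim}V=0$. Moreover, intersecting $|\mathcal{B}|$ hyperplanes with nonempty intersection lowers the dimension by at most $|\mathcal{B}|$, so $r_{\mathcal{A}}(\mathcal{B})\le|\mathcal{B}|$. In that route one must also note that $r(\mathcal{C}_{\mathcal{A}})$, defined as a maximum of ranks in \Cref{Result2}, agrees with the rank $r(\mathcal{A})$ used in the definition of $T_{\mathcal{A}}$. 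This holds because the rank function is monotone on central sets (R2), so the common rank of the maximal central sets equals the maximum rank over all central sets.
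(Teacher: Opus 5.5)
Your proposal is correct and follows the paper's route: the paper cites Ardila's Proposition 2.2 to see that $(\mathcal{A},\mathcal{C}_{\mathcal{A}},r_{\mathcal{A}})$ is a semimatroid, then specializes Theorem~\ref{Result1} with $f_n=c_n$ and $r(\mathcal{C}_{\mathcal{A}})=r(\mathcal{A})$. Your side remark is also valid: Theorem~\ref{Result2} already suffices using only (R0), (R1) and the monotonicity you note, which matches the paper's observation that Theorem~\ref{Result1} follows from Theorem~\ref{Result2}.
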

Furthermore, when $\mathcal{A}$ is central, every subarrangement is central. Thus $c_n=\binom{|\mathcal{A}|}{n}$, and \Cref{CorArrangement} reduces to the generalized Brylawski identity for the central hyperplane arrangement:
\[
\sum_{i=0}^{k}\sum_{j=0}^{k-i}\binom{k-i}{j}(-1)^jt_{ij}=(-1)^{|\mathcal{A}|-r(\mathcal{A})}\binom{k-r(\mathcal{A})}{k-|\mathcal{A}|}.
\]

We are now ready to present a small example to illustrate \Cref{CorArrangement}.
\begin{example}{\rm
Let $\mathcal{A}$ be the affine hyperplane arrangement in $\mathbb R^2$ consisting of the four lines
\[
H_1: x=0,\quad H_2:y=0,\quad H_3: x+y=0,\quad H_4: x=1,
\]
which is depicted as follows:
\begin{center}
\begin{tikzpicture}[scale=1.5,line width=1pt]
\draw[black](-2,0)--(2,0) node[right]{$H_2:y=0$};
\draw[black](0,2)--(0,-2) node[left]{$H_1:x=0$};
\draw[black](1,-2)--(1,2) node[right]{$H_4:x=1$};
\draw[black](-2,2)--(2,-2) node[right]{$H_3:x+y=0$};
\end{tikzpicture}
\end{center}
Then $r(\mathcal{A})=2$, and the central subarrangements are the empty subarrangement, the four one-element subarrangements, all two-element subarrangements except $\{H_1,H_4\}$, and the unique three-element subarrangement $\{H_1,H_2,H_3\}$. Hence
\[
c_0=1,\quad c_1=4,\quad c_2=5,\quad c_3=1,\quad c_n=0 \;(n\ge 4).
\]
Then the Tutte polynomial  $T_\mathcal{A}(x,y)$ is
\begin{align*}
T_{\mathcal{A}}(x,y)
&=(x-1)^2+4(x-1)+5+(y-1) \\
&=x^2+2x+y+1.
\end{align*}
Thus, the nonzero coefficients are
\[
t_{20}=1,\qquad t_{10}=2,\qquad t_{01}=1,\qquad t_{00}=1.
\]
\Cref{CorArrangement} gives that
\[
\begin{array}{rcl}
k=0:&
t_{00}=1
&=
1-4+5-1,\\[2mm]
k=1:&
t_{00}-t_{01}+t_{10}=2
&=
\binom{-1}{1}-4\binom{0}{1}+5\binom{1}{1}-\binom{2}{1},\\[2mm]
k=2:&
t_{00}-2t_{01}+t_{10}+t_{20}=2
&=
\binom{0}{2}-4\binom{1}{2}+5\binom{2}{2}-\binom{3}{2}.
\end{array}
\]
More generally, for every $k\geq 2$, both sides are equal to $4-k$.
}
\end{example}

For $k=0$, the first identity takes the form
\[
t_{00}=
\sum_{n=0}^{|\mathcal{A}|}(-1)^{r(\mathcal{A})+n}c_n
\]
Thus, we have
\[
t_{00}=(-1)^{r(\mathcal{A})}\chi_{\mathcal{A}}(1).
\]
When $\mathcal{A}$ is a real affine hyperplane arrangement, Zaslavsky's region-counting formula \cite{Zaslavsky1975} states that $(-1)^{r(\mathcal{A})}\chi_{\mathcal{A}}(1)$ is the number of relatively bounded regions of $\mathcal{A}$. Consequently, $t_{00}$ is the number of relatively bounded regions of $\mathcal{A}$.

As in the semimatroid setting, coning is a basic construction in the study of hyperplane arrangements that relates affine and central arrangements. Suppose an affine hyperplane $H$ in $\mathbb{F}^d$ is defined by $H: a_1x_1+a_2x_2+\cdots+a_dx_d = b$. We then define a linear hyperplane $cH$ in $\mathbb{F}^{d+1}$ by $cH: a_1x_1+a_2x_2+\cdots+a_dx_d - bx_{d+1} = 0$. Let $\mathcal{A}$ be an affine hyperplane arrangement in $\mathbb{F}^d$. The {\em cone} $c\mathcal{A}$ of $\mathcal{A}$ is the central hyperplane arrangement in $\mathbb{F}^{d+1}$ consisting of all $cH$ for $H\in\mathcal{A}$, together with the additional hyperplane $K_0: x_{d+1}=0$. The corresponding characteristic polynomials satisfy
\[
\chi_{c\mathcal{A}}(t)=(t-1)\chi_\mathcal{A}(t),
\]
which can be found in  \cite[Proposition 2.51]{Orlik-Terao1992} and \cite{Stanley2007}. The normal vectors of the linear hyperplanes in $c\mathcal{A}$ naturally determine an $\mathbb{F}$-vector matroid $M_{c\mathcal{A}}$. In this context, the semimatroid determined by the pointed matroid $(M_{c\mathcal{A}}, K_0)$ corresponds precisely to the semimatroid $(\mathcal{A},\mathcal{C}_\mathcal{A},r_\mathcal{A})$ arising from the affine hyperplane arrangement $\mathcal{A}$. Thus, as a direct consequence of  \Cref{Beta-Semi}, we obtain the following corollary.
\begin{corollary}
Let $\mathcal{A}$ be an affine hyperplane arrangement in $\mathbb{F}^d$. Then
\[
t_{00}=(-1)^{r(\mathcal{A})}\chi_{\mathcal{A}}(1)=\beta(M_{c\mathcal{A}}).
\]
When $\mathbb{F}=\mathbb{R}$, $t_{00}$ is the number of relatively bounded regions of $\mathcal{A}$.

\end{corollary}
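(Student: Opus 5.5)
The plan is to obtain the corollary directly from \Cref{Beta-Semi}, once we check that the semimatroid $(\mathcal{A},\mathcal{C}_\mathcal{A},r_\mathcal{A})$ is the semimatroid attached to the pointed matroid $(M_{c\mathcal{A}},K_0)$. The first equality, $t_{00}=(-1)^{r(\mathcal{A})}\chi_\mathcal{A}(1)$, needs no new work. It is the case $k=0$ of \Cref{CorArrangement}, which gives $t_{00}=\sum_n(-1)^{r(\mathcal{A})+n}c_n$. Setting $t=1$ in the defining sum $\chi_\mathcal{A}(t)=\sum_{\mathcal{B}\in\mathcal{C}_\mathcal{A}}(-1)^{|\mathcal{B}|}t^{\dim\bigcap\mathcal{B}}$ gives $\sum_n(-1)^nc_n$. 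Multiplying by $(-1)^{r(\mathcal{A})}$ yields the claim.

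For the equality with $\beta(M_{c\mathcal{A}})$, I would verify the identification of semimatroids in three steps.

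\textbf{Step 1: rank.} For $\mathcal{B}\subseteq\mathcal{A}$, the rank of $\{cH:H\in\mathcal{B}\}$ in $M_{c\mathcal{A}}$ equals the rank of the augmented matrix $[a\mid -b]$ of the corresponding linear system.

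\textbf{Step 2: centrality.} By the Rouch\'e--Capelli criterion, $\mathcal{B}$ is central (the system is consistent) if and only if the rank of the coefficient matrix $[a]$ equals the rank of the augmented matrix. This holds if and only if $e_{d+1}$, the normal vector of $K_0$, does not lie in the span of the rows $(a,-b)$. Equivalently, $K_0\notin\operatorname{cl}_{M_{c\mathcal{A}}}(\mathcal{B})$. Hence $\mathcal{C}_\mathcal{A}$ is exactly the collection of central sets of the pointed matroid.

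\textbf{Step 3: rank functions agree.} When $\mathcal{B}$ is central, the common rank equals $\operatorname{rank}[a]=d-\dim\bigcap_{H\in\mathcal{B}}H=r_\mathcal{A}(\mathcal{B})$. So $r_\mathcal{A}$ is the restriction of $r_{M_{c\mathcal{A}}}$ to $\mathcal{C}_\mathcal{A}$.

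By the uniqueness in Ardila's correspondence \cite[Theorem 5.4]{Ardila2007}, the two semimatroids coincide. \Cref{Beta-Semi} then gives $t_{00}=\beta(M_{c\mathcal{A}})$, with $t_{00}$ the same coefficient on both sides because the Tutte polynomials agree.

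For $\mathbb{F}=\mathbb{R}$, I would quote Zaslavsky's theorem \cite{Zaslavsky1975}: $(-1)^{r(\mathcal{A})}\chi_\mathcal{A}(1)$ counts the relatively bounded regions. One point needs care. The $\chi_\mathcal{A}$ in this section carries the extra factor $t^{d-r(\mathcal{A})}$ compared with $(-1)^{r}T_\mathcal{A}(1-t,0)$, and this factor equals $1$ at $t=1$, so the evaluation at $1$ is unaffected.

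The only real obstacle is Step 2, the linear-algebra check that centrality corresponds to $K_0\notin\operatorname{cl}(\mathcal{B})$. I expect it to be a routine consistency argument.
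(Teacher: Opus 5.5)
Your proposal is correct and follows the same route as the paper. The first equality is the case $k=0$ of \Cref{CorArrangement}, the case $\mathbb{F}=\mathbb{R}$ uses Zaslavsky's theorem, and the $\beta$ equality comes from identifying $(\mathcal{A},\mathcal{C}_{\mathcal{A}},r_{\mathcal{A}})$ with the semimatroid of $(M_{c\mathcal{A}},K_0)$ and applying \Cref{Beta-Semi}. The paper only asserts that identification, whereas your Rouch\'e--Capelli check ($\mathcal{B}$ central $\iff$ $e_{d+1}$ is not in the span of the rows $(a,-b)$ $\iff$ $K_0\notin\operatorname{cl}_{M_{c\mathcal{A}}}(\mathcal{B})$, with matching ranks) correctly supplies the missing detail.
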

\subsection{Application to biased graphs}\label{Sec3-2}
In this subsection, we specialize \Cref{Result1} to biased graphs. Biased graphs provide another natural semimatroid model; they were introduced by Zaslavsky \cite{Zaslavsky1987,Zaslavsky1989} as an abstraction of gain graphs that retains their combinatorial structure. Let $G=(V,E)$ be a finite graph. If $G'$ is a subgraph containing no edge $e\in E$, we denote by $G'+e$ the subgraph obtained from $G'$ by adding $e$. For $u,v\in V$, a {\em $uv$-path} is a path from $u$ to $v$. A closed path is called a {\em cycle}, and its edge set is called a {\em circle}. A {\em theta graph} is the union of three internally disjoint open paths with the same two endpoints. A {\em biased graph} $\Omega = (G,\mathcal{B})$ consists of a graph $G$ and a class $\mathcal{B}$ of circles in $G$ satisfying the following {\em theta property}: if two circles of a theta graph belong to $\mathcal{B}$, then so does the third. We call $G$ the {\em underlying graph} of $\Omega$, and $\mathcal{B}$ the set of {\em balanced circles} of $\Omega$. An edge set is {\em balanced} if every circle contained in it is balanced. Moreover, for any $A\subseteq E$, we say that the spanning subgraph $(V,A)$ is {\em balanced} if $A$ itself is balanced. In particular, the cycle corresponding to a balanced circle is called a {\em balanced cycle}. We denote by $\mathcal{C}_\Omega$ the collection of balanced edge sets of $\Omega$. It is clear that if $A\subseteq E$ is balanced, then every subset of $A$ is balanced as well. Thus, $\mathcal{C}_\Omega$ is a simplicial complex. The {\em rank function} $r_\Omega$ of $\Omega$ is given by
\[
r_\Omega(A):=|V|-c(A),\quad\forall\, A\in\mathcal{C}_\Omega,
\]
where $c(A)$ is the number of connected components of the spanning subgraph $(V,A)$. In fact, $r_\Omega$ is the usual rank function $r_G$ of the underlying graph $G$ restricted to balanced edge sets. Thus, the rank $r(\Omega)$ of $\Omega$ is at most the rank $r_G(E)$ of $G$. Since every spanning forest of $G$ is balanced and has rank $|V|-c(E)=r_G(E)$, we obtain $r(\Omega)=|V|-c(E)$.

In 1995, Zaslavsky \cite{Zaslavsky1995} further generalized the classical chromatic and Tutte polynomials of graphs to biased graphs, and proposed the balanced chromatic polynomial and the balanced Tutte polynomial. The {\em balanced chromatic polynomial} $\chi_\Omega^b(t)$  of $\Omega$ is defined by
\[
\chi_\Omega^b(t):=\sum_{A\in\mathcal{C}_\Omega}(-1)^{|A|}t^{r(\Omega)-r_\Omega(A)},
\]
and the {\em balanced Tutte polynomial} $T_\Omega^b(x,y)$ of $\Omega$ is given by
\[
T_\Omega^b(x,y):=\sum_{A\in\mathcal{C}_\Omega}(x-1)^{r(\Omega)-r_\Omega(A)}(y-1)^{|A|-r_\Omega(A)}.
\]
Clearly, $\chi_\Omega^b(t)=(-1)^{r(\Omega)}T_\Omega^b(1-t,0)$, and these polynomials reduce to the corresponding chromatic and Tutte polynomials when $\mathcal{B}$ is the set of all circles of $G$. 

Work of Forge and Zaslavsky \cite{FZ2016} implies that a gain graph yields a semimatroid when its balanced edge sets are taken as central sets, and hence suggests that the triple $(E,\mathcal{C}_{\Omega},r_{\Omega})$ should be a semimatroid. Since we have not found a detailed proof of this claim in the existing literature, we give a self-contained proof below. We first establish the following lemma, which characterizes how balance is preserved under the theta construction.
\begin{lemma}\label{Balanced}
Let $\Omega=(G,\mathcal{B})$ be a biased graph with the underlying graph $G=(V,E)$, $A\in\mathcal{C}_\Omega$, and $e=uv$ be an edge not in $A$. If $(V,A)$ contains a $uv$-path $P$ such that $P+e$ is a balanced cycle, then $A\sqcup\{e\}$ is balanced.
\end{lemma}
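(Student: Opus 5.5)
We argue by contradiction, using the theta property. Set $A' := A\sqcup\{e\}$. Any circle of $A'$ that avoids $e$ already lies in $A$, and $A\in\mathcal{C}_\Omega$ makes it balanced. So it suffices to show that every circle $C\subseteq A'$ with $e\in C$ is balanced.

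Such a circle has the form $C=Q+e$, where $Q$ is a $uv$-path in $(V,A)$. If $e$ is a loop ($u=v$), the only circle through $e$ is $\{e\}=P+e$ itself, which is balanced by hypothesis. So assume $u\ne v$, and suppose some $uv$-path $Q\subseteq A$ has $Q+e$ unbalanced. Among all such $Q$, choose one minimizing $|Q\setminus P|$. Since $Q\neq P$, we have $|Q\setminus P|\ge 1$.

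The key step is the standard path-exchange argument. Walk along $Q$ from $u$ and let $x$ be the first vertex after which $Q$ leaves $P$. Then let $y$ be the first vertex after $x$ on $Q$ that lies on $P$; such a $y$ exists because $v\in P$. Let $R$ be the subpath of $Q$ from $x$ to $y$; it is internally disjoint from $P$. Let $P_{xy}$ be the subpath of $P$ between $x$ and $y$. The union $P\cup R\cup\{e\}$ is then a theta graph. Its three internally disjoint $xy$-paths are $R$, $P_{xy}$, and the route $P$-from-$x$-to-$u$, then $e$, then $P$-from-$v$-to-$y$. Note that $x$ might come after $y$ along $P$, so the orientation must be handled with care.

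The theta graph has three circles:
\begin{itemize}
\item $P+e$, which is balanced by hypothesis;
\item $R\cup P_{xy}$, which lies in $A$ and is therefore balanced;
\item the third circle $P'+e$, where $P'$ is the $uv$-path obtained from $P$ by replacing $P_{xy}$ with $R$.
\end{itemize}
By the theta property, $P'+e\in\mathcal{B}$. The path $P'$ agrees with $Q$ on a longer initial segment than $P$ does, namely up to $y$. Replacing $P$ by $P'$ therefore gives a path with balanced $P'+e$ that is ``closer'' to $Q$.

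To make this induction rigorous, I would not minimize $|Q\setminus P|$ directly. Instead, among all $uv$-paths $P$ in $A$ with $P+e$ balanced, choose one sharing the longest initial segment with the fixed $Q$. If $P\ne Q$, the construction above produces $P'$ with a strictly longer common initial segment, a contradiction. Hence $P=Q$, which contradicts the assumption that $Q+e$ is unbalanced.

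The main obstacle is verifying that $P\cup R\cup e$ really is a theta graph, i.e., that its three paths are internally disjoint, with $x\ne y$. Here $x\neq y$ holds because $R$ has at least one edge and a path cannot revisit a vertex. The position of $x$ and $y$ along $P$ can be in either order, and the choice of $x$ as the last common vertex of the shared initial segment guarantees that the new path $P'$ is still a simple path extending the common prefix. I expect to check the case where $y$ precedes $x$ on $P$ separately. In that case $P'$ must be formed as the $u$-to-$x$ segment of $Q$, followed by $R$, followed by $P$ from $y$ to $v$, and one must check that this segment is disjoint from $P[u,y]$.
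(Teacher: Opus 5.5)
Your strategy is the paper's. You fix $Q$, take $x$ to be the end of the maximal common initial subpath of $P$ and $Q$, and take $y$ to be the first vertex after $x$ on $Q$ lying on $P$. The theta graph formed by $R=Q[x,y]$, $P[x,y]$ and $P[u,x]\cup\{e\}\cup P[y,v]$ then yields $P'$ with $P'+e$ balanced and a strictly longer common prefix with $Q$. Your extremal formulation (choose $P$ maximizing the common prefix with $Q$) is equivalent to the paper's explicit iteration $P=P_0,\dots,P_m=Q$.

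The gap is that the step you call the main obstacle is left open and misdiagnosed. The case ``$y$ precedes $x$ on $P$'' never occurs. We have $P[u,x]=Q[u,x]$, and $y$ is a vertex of $Q$ strictly after $x$. Since $Q$ is a path, $y\notin V(Q[u,x])=V(P[u,x])$. As $y\in V(P)$, it lies on $P$ strictly after $x$. This is the paper's sentence ``$Q$ cannot return after $a$ to any vertex of the common initial subpath before $a$.'' It is exactly what gives internal disjointness of the three $xy$-paths:
\begin{itemize}
\item $R$ is internally disjoint from $P$;
\item $P[x,y]$ and $P[u,x]\cup P[y,v]$ are complementary pieces of $P$ meeting only in $x$ and $y$;
\item $e$ meets $P$ only at $u$ and $v$.
\end{itemize}
It also guarantees that $P'=P[u,x]\cup R\cup P[y,v]$ is a path agreeing with $Q$ through $y$.

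Your planned separate treatment of the phantom case would not work anyway. If $y$ did precede $x$ on $P$, then $Q[u,x]=P[u,x]$ would already contain $y$. So ``$Q[u,x]$, then $R$, then $P$ from $y$ to $v$'' would revisit $y$ and would not be a path.

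Relatedly, make the definition of $x$ precise. ``The first vertex after which $Q$ leaves $P$'' must mean the terminal vertex of the maximal common initial subpath, as your later remark intends. It must not mean ``the first vertex whose successor on $Q$ is off $P$.'' Under that looser reading, $Q$ may first run along chords joining vertices of $P$, and then $y$ genuinely can precede $x$. In that situation $P[u,x]\neq Q[u,x]$, and the exchange need not lengthen the common prefix.

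With $x$ defined correctly and the one-line observation above inserted, your argument is complete. The loop case you treat separately is harmless; there $P=Q$ is the trivial path.
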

\begin{proof}
We first assert that for every $uv$-path $Q$ in $(V,A)$, the cycle $Q+e$ is balanced. For convenience, we denote by $V(G')$ the vertex set of any subgraph $G'$ of $G$, and view every $uv$-path as oriented from $u$ to $v$. If $R$ is an oriented $uv$-path and $x,y\in V(R)$ appear in that order along $R$, we write $R[x,y]$ for the subpath of $R$ from $x$ to $y$. We will transform $P$ into $Q$ by replacing one subpath at a time. More precisely, we will construct a sequence of $uv$-paths $P=P_0,P_1,\ldots,P_m=Q$ in $(V,A)$ with $P_0=P$, such that $P_i+e$ is balanced for every $i$.

\noindent{\bf Step 1: Choose the first place where $Q$ leaves $P_i$.}  Suppose we have constructed $P_i$ in $(V,A)$ with $P_i+e$ already balanced. If $P_i = Q$, we stop. Otherwise, let $a$ be the terminal vertex of the maximal common initial subpath of $P_i$ and $Q$, where the trivial initial subpath consisting of only $u$ is allowed. Thus $a$ always exists; since $P_i \ne Q$, $a$ is not the terminal vertex $v$, and $Q$ leaves $P_i$ immediately after $a$. The two paths may meet again after $a$, and the next step will select the first such meeting.

\noindent{\bf Step 2: Follow $Q$ until it returns to $P_i$.} Starting at $a$ and walking along $Q$, let $b$ be the first vertex after $a$ that lies on $P_i$. This vertex exists because $v\in V(P_i)\cap V(Q)$. Since $Q$ is a  path, it cannot return after $a$ to any vertex of the common initial subpath before $a$. Hence $b$ lies on the subpath $P_i[a,v]$. Consider
\[
S:=Q[a,b],\quad T:=P_i[a,b].
\]
From the choice of $b$, we have
\[
(V(S)\setminus\{a,b\})\cap V(P_i)=\varnothing.
\]
This implies that $S$ and $T$ are internally vertex-disjoint $ab$-paths, and hence $S\cup T$ is a cycle contained in $(V,A)$, as depicted in the following figure. As $A$ is balanced, the cycle $S\cup T$ is balanced.
\begin{center}
\begin{tikzpicture}[
  x=1.5cm,y=1cm,
  line cap=round,
  line join=round,
  vertex/.style={circle,fill=black,inner sep=1.45pt},
  solidblack/.style={line width=1.45pt,black},
  dashblack/.style={line width=1.45pt,densely dashed},
  solidblue/.style={line width=1.45pt,blue!70!black},
  dashblue/.style={line width=1.45pt,blue!70!black,densely dashed},
  solidteal/.style={line width=1.45pt,teal!75!black},
  dashteal/.style={line width=1.45pt,teal!75!black,densely dashed},
  solidgray/.style={line width=1.45pt,black},
  dashgray/.style={line width=1.45pt,black,densely dashed},
  lab/.style={font=\small}
]

\coordinate (u) at (0,0);
\coordinate (a) at (1.75,0.62);
\coordinate (b) at (5.05,0.62);
\coordinate (v) at (6.85,0);

\coordinate (l1) at (0.62,0.24);
\coordinate (l2) at (1.17,0.45);

\coordinate (s1) at (2.60,1.32);
\coordinate (s2) at (4.20,1.32);

\coordinate (t1) at (2.60,0.10);
\coordinate (t2) at (4.20,0.10);

\coordinate (r1) at (5.68,0.42);
\coordinate (r2) at (6.28,0.18);

\draw[solidblack] (u) .. controls (0.22,0.10) and (0.42,0.18) .. (l1);
\draw[dashblack]  (l1) .. controls (0.82,0.32) and (0.98,0.39) .. (l2);
\draw[solidblack] (l2) .. controls (1.38,0.52) and (1.55,0.58) .. (a);

\draw[solidteal] (a) .. controls (2.05,1.05) and (2.25,1.24) .. (s1);
\draw[dashteal]  (s1) .. controls (3.05,1.62) and (3.75,1.62) .. (s2);
\draw[solidteal] (s2) .. controls (4.55,1.24) and (4.78,1.05) .. (b);

\draw[solidblue] (a) .. controls (2.05,0.18) and (2.25,0.10) .. (t1);
\draw[dashblue]  (t1) .. controls (3.05,-0.10) and (3.75,-0.10) .. (t2);
\draw[solidblue] (t2) .. controls (4.55,0.10) and (4.78,0.18) .. (b);

\draw[solidgray] (b) .. controls (5.28,0.56) and (5.47,0.49) .. (r1);
\draw[dashgray]  (r1) .. controls (5.88,0.34) and (6.08,0.26) .. (r2);
\draw[solidgray] (r2) .. controls (6.48,0.10) and (6.62,0.05) .. (v);

\node[vertex] at (u) {};
\node[vertex] at (a) {};
\node[vertex] at (b) {};
\node[vertex] at (v) {};

\node[lab,below left=1pt] at (u) {$u$};
\node[lab,above left=1pt] at (a) {$a$};
\node[lab,above right=1pt] at (b) {$b$};
\node[lab,below right=1pt] at (v) {$v$};

\node[lab] at (-.2,1) {$P_i[u,a]=Q[u,a]$};
\node[lab,teal!75!black] at (3.40,1.92) {$S=Q[a,b]$};
\node[lab,blue!70!black] at (3.40,-0.55) {$T=P_i[a,b]$};
\node[lab,black] at (6.23,0.80) {$P_i[b,v]$};

\end{tikzpicture}
\end{center}

\noindent{\bf Step 3: Replace $T$ with $S$.} Let \[P_{i+1}:=P_i[u,a]\cup S\cup P_i[b,v]\] be the new $uv$-path. By construction, $P_{i+1}$ is indeed a $uv$-path in $(V,A)$. Moreover, $P_{i+1}$ agrees with $Q$ from $u$ at least as far as $b$, while $P_i$ only agreed with $Q$ up to $a$. Thus, the length of the common initial subpath of $P_{i+1}$ with $Q$ is strictly larger than that of $P_i$. It remains to verify that $P_{i+1}+e$ is balanced. For this purpose, we consider the following subgraph of $(V,A\sqcup\{e\})$:
\begin{center}
\begin{tikzpicture}[
  x=1.5cm,y=1cm,
  line cap=round,
  line join=round,
  vertex/.style={circle,fill=black,inner sep=1.45pt},
  solidblack/.style={line width=1.45pt,black},
  dashblack/.style={line width=1.45pt,densely dashed},
  solidblue/.style={line width=1.45pt,blue!70!black},
  dashblue/.style={line width=1.45pt,blue!70!black,densely dashed},
  solidteal/.style={line width=1.45pt,teal!75!black},
  dashteal/.style={line width=1.45pt,teal!75!black,densely dashed},
  edgee/.style={line width=1.45pt,red!75!black},
  lab/.style={font=\small}
]

\coordinate (u) at (0,0);
\coordinate (a) at (2.00,0.62);
\coordinate (b) at (5.60,0.62);
\coordinate (v) at (7.60,0);

\coordinate (l1) at (0.72,0.24);
\coordinate (l2) at (1.32,0.45);

\coordinate (s1) at (2.90,1.45);
\coordinate (s2) at (4.70,1.45);

\coordinate (t1) at (2.90,0.18);
\coordinate (t2) at (4.70,0.18);

\coordinate (r1) at (6.28,0.42);
\coordinate (r2) at (6.88,0.20);

\draw[solidblack] (u) .. controls (0.28,0.12) and (0.50,0.20) .. (l1);
\draw[dashblack]  (l1) .. controls (0.95,0.33) and (1.12,0.39) .. (l2);
\draw[solidblack] (l2) .. controls (1.62,0.54) and (1.80,0.58) .. (a);

\draw[solidteal] (a) .. controls (2.35,1.06) and (2.55,1.28) .. (s1);
\draw[dashteal]  (s1) .. controls (3.35,1.78) and (4.25,1.78) .. (s2);
\draw[solidteal] (s2) .. controls (5.05,1.28) and (5.28,1.06) .. (b);

\draw[solidblue] (a) .. controls (2.35,0.30) and (2.55,0.20) .. (t1);
\draw[dashblue]  (t1) .. controls (3.35,-0.04) and (4.25,-0.04) .. (t2);
\draw[solidblue] (t2) .. controls (5.05,0.20) and (5.28,0.30) .. (b);

\draw[solidblack] (b) .. controls (5.88,0.56) and (6.08,0.49) .. (r1);
\draw[dashblack]  (r1) .. controls (6.48,0.34) and (6.66,0.27) .. (r2);
\draw[solidblack] (r2) .. controls (7.18,0.10) and (7.36,0.05) .. (v);

\draw[edgee] (u) .. controls (1.65,-1.10) and (5.95,-1.10) .. (v);

\node[vertex] at (u) {};
\node[vertex] at (a) {};
\node[vertex] at (b) {};
\node[vertex] at (v) {};

\node[lab,below left=1pt] at (u) {$u$};
\node[lab,above left=1pt] at (a) {$a$};
\node[lab,above right=1pt] at (b) {$b$};
\node[lab,below right=1pt] at (v) {$v$};

\node[lab,teal!75!black] at (3.80,2.05) {$S$};
\node[lab,blue!70!black] at (3.80,0.35) {$T$};
\node[lab,red!75!black] at (3.80,-.55) {$e$};

\node[font=\normalsize] at (3.80,-1.55)
{$R=P_i[u,a]\cup\{e\}\cup P_i[b,v]$, and $S\cup T\cup R$ forms a theta graph.};
\end{tikzpicture}
\end{center}
Let
\[
R:=P_i[u,a] \cup \{e\} \cup P_i[b,v]
\]
be the third $ab$-path. We see that $T$, $S$ and $R$ are pairwise internally vertex-disjoint (as in the figure above), forming a theta subgraph. The three cycles of this theta subgraph are
\[
  C_1=T\cup R=P_i+e,\qquad
  C_2=T\cup S,\qquad
  C_3=S\cup R=P_{i+1}+e.
\]
By the induction hypothesis, $C_1$ is balanced; since $(V,A)$ is balanced and  $C_2$ is its subgraph, $C_2$ is also balanced. It follows from the theta property of the biased graph that $C_3 = P_{i+1} + e$ must be balanced.

\noindent{\bf Step 4: Iteration.}
We start from $P_0 = P$ and iterate the step that transforms $P_i$ into $P_{i+1}$, as illustrated in the following figure:
\begin{center}
\begin{tikzpicture}[
  x=1.5cm,y=1cm,scale=1,transform shape,
  line cap=round,line join=round,
  vertex/.style={circle,fill=black,inner sep=1.45pt},
  common/.style={line width=1.45pt,black},
  oldseg/.style={line width=1.45pt,blue!70!black},
  newseg/.style={line width=1.45pt,teal!75!black},
  future/.style={line width=1.45pt,teal!75!black,densely dashed},
  edgee/.style={line width=1.45pt,red!75!black},
  arrow/.style={-{Stealth[length=2.2mm,width=1.7mm]},line width=1pt,black},
  label/.style={font=\small}
]
\begin{scope}[shift={(0,0)}]
\coordinate (p0) at (0,0);
\coordinate (p1) at (0.58,0.36);
\coordinate (p2) at (1.40,0.36);
\coordinate (p3) at (1.98,0);
\draw[common] (p0) .. controls (0.18,0.15) and (0.36,0.27) .. (p1);
\draw[oldseg] (p1) .. controls (0.78,-0.16) and (1.20,-0.16) .. (p2)
  node[midway,above=1pt,font=\scriptsize] {$T_0$};
\draw[future] (p1) .. controls (0.78,0.86) and (1.20,0.86) .. (p2)
  node[midway,above=1pt,font=\scriptsize] {$S_0$};
\draw[common] (p2) .. controls (1.62,0.27) and (1.80,0.15) .. (p3);
\draw[edgee] (p0) .. controls (0.43,-0.63) and (1.55,-0.63) .. (p3);
\foreach \p in {p0,p1,p2,p3} \node[vertex] at (\p) {};
\node[label,red!75!black] at (0.99,-.3) {$e$};
\node[label] at (0.99,-1.02) {$P_0=P$};
\end{scope}

\draw[arrow] (2.45,0.02) -- (3.15,0.02)
  node[midway,above=3pt,font=\scriptsize] {$T_0\mapsto S_0$};

\begin{scope}[shift={(3.58,0)}]
\coordinate (q0) at (0,0);
\coordinate (q1) at (0.50,0.34);
\coordinate (q2) at (1.00,0.34);
\coordinate (q3) at (1.52,0.34);
\coordinate (q4) at (2.02,0);
\draw[common] (q0) .. controls (0.17,0.14) and (0.33,0.25) .. (q1);
\draw[newseg] (q1) .. controls (0.62,0.76) and (0.86,0.76) .. (q2)
  node[midway,above=1pt,font=\scriptsize] {$S_0$};
\draw[oldseg] (q2) .. controls (1.13,-0.12) and (1.39,-0.12) .. (q3)
  node[midway,above=1pt,font=\scriptsize] {$T_1$};
\draw[future] (q2) .. controls (1.13,0.84) and (1.39,0.84) .. (q3)
  node[midway,above=1pt,font=\scriptsize] {$S_1$};
\draw[common] (q3) .. controls (1.70,0.25) and (1.86,0.14) .. (q4);
\draw[edgee] (q0) .. controls (0.42,-0.63) and (1.60,-0.63) .. (q4);
\foreach \p in {q0,q1,q2,q3,q4} \node[vertex] at (\p) {};
\node[label,red!75!black] at (1.01,-.3) {$e$};
\node[label] at (1.01,-1.02) {$P_1$};
\end{scope}

\draw[arrow] (6.10,0.02) -- (6.72,0.02)
node[midway,above=3pt,font=\scriptsize] {$T_1\mapsto S_1$};
\node[font=\large] at (7.18,0.02) {$\cdots$};
\draw[arrow] (7.60,0.02) -- (8.20,0.02)
node[midway,above=3pt,font=\scriptsize] {$T_{m-1}\mapsto S_{m-1}$};

\begin{scope}[shift={(8.62,0)}]
\coordinate (r0) at (0,0);
\coordinate (r1) at (0.42,0.34);
\coordinate (r2) at (0.86,0.34);
\coordinate (r3) at (1.34,0.34);
\coordinate (r4) at (1.78,0.34);
\coordinate (r5) at (2.20,0);
\draw[common] (r0) .. controls (0.14,0.14) and (0.28,0.25) .. (r1);
\draw[newseg] (r1) .. controls (0.54,0.78) and (0.74,0.78) .. (r2)
  node[midway,above=1pt,font=\scriptsize] {$S_0$};
\draw[future] (r2) -- (r3);
\draw[newseg] (r3) .. controls (1.46,0.78) and (1.66,0.78) .. (r4)
  node[midway,above=1pt,font=\scriptsize] {$S_{m-1}$};
\draw[common] (r4) .. controls (1.94,0.25) and (2.06,0.14) .. (r5);
\draw[edgee] (r0) .. controls (0.46,-0.63) and (1.74,-0.63) .. (r5);
\foreach \p in {r0,r1,r2,r3,r4,r5} \node[vertex] at (\p) {};
\node[label,red!75!black] at (1.10,-.3) {$e$};
\node[label] at (1.10,-1.02) {$P_m=Q$};
\end{scope}

\node[label,text width=17.5cm] at (6.25,-1.88)
{The blue subpath is replaced with the teal subpath, which transforms the balanced cycle\\ $P_i+e$ into the balanced cycle $P_{i+1}+e$.};
\end{tikzpicture}
\end{center}

Notice that every switch is nontrivial, and hence each switch strictly extends the common initial subpath between the current path and the fixed path $Q$. Since $Q$ has only finitely many vertices, the process terminates after a finite number of switches, yielding:
\[
  P = P_0,\ P_1,\ \ldots,\ P_m = Q,
\]
with $P_i + e$ balanced for every $i$. In particular, $Q + e = P_m + e$ is balanced. This proves the assertion.

Finally, let $C$ be any cycle in $(V,A\sqcup\{e\})$. If $e \notin C$, then $C$ is balanced because $A$ is balanced. If $e \in C$, then $C - e$ is a $uv$-path in $(V,A)$, and our earlier assertion implies that $C$ is balanced. Therefore, every cycle in $(V, A \sqcup \{e\})$ is balanced, i.e., $A \sqcup \{e\}$ is balanced. 
\end{proof}
With \Cref{Balanced}, we now proceed to show that the triple $(E,\mathcal{C}_{\Omega},r_{\Omega})$ is a semimatroid.
\begin{proposition}\label{PropBiasedSemi}
Let $\Omega=(G,\mathcal{B})$ be a biased graph with the underlying graph $G=(V,E)$. Then the triple $(E,\mathcal{C}_{\Omega},r_{\Omega})$ is a semimatroid.
\end{proposition}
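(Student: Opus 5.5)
We verify the five axioms in turn, using that $r_\Omega$ is the restriction of the graphic rank function $r_G$ to $\mathcal{C}_\Omega$, and that $\mathcal{C}_\Omega$ is a nonempty simplicial complex (it contains $\emptyset$ and is closed under subsets, as already noted).

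\textbf{Axioms (R1)--(R3).} These follow immediately from the corresponding properties of the graphic matroid rank $r_G$ on $2^E$: $r_G(A)\le|A|$, $r_G$ is monotone, and $r_G$ is submodular. Restricting these inequalities to balanced sets gives (R1)--(R3) for $r_\Omega$.

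\textbf{Axiom (SR4).} Let $A,B\in\mathcal{C}_\Omega$ with $r_\Omega(A)=r_\Omega(A\cap B)$. We show $A\cup B$ is balanced by adding the edges of $B\setminus A$ to $A$ one at a time. Fix an enumeration $e_1,\dots,e_s$ of $B\setminus A$, and set $A_t=A\cup\{e_1,\dots,e_t\}$; we prove by induction that $A_t$ is balanced. Since $r_G(A\cap B)=r_G(A)$, every edge of $A$ lies in the closure of $A\cap B$. Hence $A\cap B$ and $A$ induce the same connected components on $V$. Now take $e_t=uv\in B\setminus A$.

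\emph{If $u,v$ lie in different components of $(V,A_{t-1})$,} then adding $e_t$ creates no new circle, so $A_t$ is balanced.

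\emph{Otherwise,} we need a $uv$-path $P$ in $(V,A_{t-1})$ with $P+e_t$ balanced, so that \Cref{Balanced} applies. The natural choice is a path inside $(A\cap B)\cup\{e_1,\dots,e_{t-1}\}\subseteq B$: then $P+e_t\subseteq B$ is balanced because $B$ is. Such a path exists. Indeed, the components of $(V,(A\cap B)\cup\{e_1,\dots,e_{t-1}\})$ coincide with those of $(V,A_{t-1})$, since $A$ and $A\cap B$ have the same components and we add the same edges to both. So $u,v$ are joined inside a subset of $B$, and \Cref{Balanced} gives that $A_t$ is balanced. At $t=s$ this yields $A\cup B\in\mathcal{C}_\Omega$.

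\textbf{Axiom (SR5).} Let $A,B\in\mathcal{C}_\Omega$ with $r_\Omega(A)<r_\Omega(B)$. By the graphic matroid exchange property, there is $e\in B\setminus A$ with $r_G(A\cup e)>r_G(A)$. Such an $e$ joins two distinct components of $(V,A)$, so $A\cup e$ contains no new circle and is balanced.

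\textbf{Main obstacle.} The delicate step is (SR4), specifically justifying that the path used for \Cref{Balanced} can be taken inside $B$. This rests on the component-equality argument (same rank implies same partition of $V$, because $A\cap B\subseteq A$), which I would state carefully as a separate observation before running the induction.
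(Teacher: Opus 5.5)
Your proof is correct. For (R1)--(R3) and (SR5) it matches the paper's argument in substance; the only difference there is that the paper proves the augmentation step directly (if no edge of $B\setminus A$ joined two components of $(V,A)$, then $r_G(A\cup B)=r_G(A)$, and monotonicity would give $r_\Omega(B)\le r_\Omega(A)$) instead of citing the matroid exchange property.

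The real difference is in (SR4), where you run the induction in the opposite direction.

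\textbf{The paper's direction.} It enlarges $B$ by the edges of $A\setminus B$. Since $(V,A)$ and $(V,A\cap B)$ have the same components, each $e=uv\in A\setminus B$ has a $uv$-path $P_e$ in $A\cap B$. This path is fixed once and for all and lies in every intermediate set $B\sqcup\{e_1,\dots,e_{i-1}\}$. Moreover, $P_e+e$ is a cycle of $A$, hence balanced. So \Cref{Balanced} applies at every step with no case distinction.

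\textbf{Your direction.} You enlarge $A$ by the edges of $B\setminus A$ and certify each new cycle inside $B$. Such an edge may join two components of $(V,A)$. You therefore need the case split, and also the extra observation that $(V,(A\cap B)\cup Z)$ and $(V,A\cup Z)$ have the same components for every $Z$, so that the witnessing path can be routed through earlier-added edges. That observation is true: the component partition of $X\cup Z$ is the join of those of $X$ and $Z$. So your argument goes through.

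However, the step you flag as the main obstacle is an artifact of the direction you chose. It disappears if you add the edges of $A\setminus B$ instead, since these are the edges the hypothesis $r_\Omega(A)=r_\Omega(A\cap B)$ actually controls. The paper's route is therefore slightly more economical, while yours does a bit more bookkeeping for the same result.
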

\begin{proof}
Note that the pair $(E,r_G)$ defines the cycle matroid of $G$, and $r_\Omega$ is the restriction of the rank function $r_G$ to the balanced edge sets. Therefore, the function $r_\Omega$ automatically satisfies the axioms (R1)--(R3) by \cite[Theorem 1.3.2]{Oxley2011}.

We now verify (SR4). Let $A,B\in\mathcal{C}_{\Omega}$ satisfy $r_{\Omega}(A)=r_{\Omega}(A\cap B)$. Since $r_G(X)=|V|-c(X)$ for all $X\subseteq E$ and $r_\Omega(X)=r_G(X)$ for all $X\in\mathcal{C}_\Omega$, we immediately obtain $c(A)=c(A\cap B)$. Together with the inclusion relation $A\cap B\subseteq A$, this implies that the spanning subgraphs $(V,A)$ and $(V,A\cap B)$ induce the same connected vertex partition of $V$, where each block is the vertex set of a connected component of $(V,A)$. Consequently, for every edge $e=uv\in A\setminus B$, there exists a $uv$-path $P_e$ in $(V,A\cap B)$. Write $A\setminus B=\{e_1,\ldots,e_k\}$ and set $X_0=B$, $X_i=B\sqcup\{e_1,\ldots,e_i\}$ for $i=1,\ldots k$. We prove by induction that each $X_i$ is balanced. The base case $X_0=B$ is clear. Suppose $X_{i-1}$ is balanced. As $P_{e_i}$ is a path in $(V,A\cap B)$ and $A\cap B\subseteq X_{i-1}$, the path $P_{e_i}$ is contained in $(V,X_{i-1})$, and  $P_{e_i}+e_i$ forms a cycle of $(V,A)$. Since $A$ itself is balanced,  $P_{e_i}+e_i$ is also balanced. It follows from \Cref{Balanced} that $X_i$ is balanced. Thus, $X_k = A\cup B$ is balanced, which completes the proof of (SR4).

For (SR5), suppose $A,B\in\mathcal{C}_{\Omega}$ satisfy $r_{\Omega}(A)<r_{\Omega}(B)$. We claim that some edge $e\in B\setminus A$ joins two different connected components of $(V,A)$. Otherwise, $(V,A)$ and $(V,A\sqcup(B\setminus A)) = (V,A\cup B)$ have the same number of connected components, and hence $r_G(A\cup B)=r_G(A)$. Since $B\subseteq A\cup B$, the monotonicity of $r_G$ gives $r_{\Omega}(B)=r_G(B)\le r_G(A\cup B)=r_G(A)=r_{\Omega}(A)$, which contradicts $r_{\Omega}(A)<r_{\Omega}(B)$. Now choose such an edge $e$. By the choice of $e$, it connects two different connected components of $(V,A)$, so $(V,A\sqcup\{e\})$ has no new cycles compared with $(V,A)$. Consequently, every circle in $A\sqcup\{e\}$ is already a circle of $A$, which is balanced by the assumption that $A\in\mathcal{C}_{\Omega}$. Hence $A\sqcup\{e\}$ is also balanced, i.e., $A\sqcup\{e\}\in\mathcal{C}_{\Omega}$, which completes the proof of (SR5).
\end{proof}

Following \Cref{PropBiasedSemi}, the specialization of \Cref{Result1} to biased graphs is the next result.
\begin{corollary}[Generalized Brylawski Identities for Biased Graphs]\label{CorBiased} 
Let $\Omega=(G,\mathcal{B})$ be a biased graph with the underlying graph $G=(V,E)$, and $T_{\Omega}^b(x,y)=\sum_{i,j\ge0}t_{ij}x^iy^j$. Then, for all integers $k\geq0$,
\[
\sum_{i=0}^{k}\sum_{j=0}^{k-i}\binom{k-i}{j}(-1)^jt_{ij}=\sum_{n=0}^{|E|}(-1)^{|V|-c(E)+n}b_n\binom{k+n-|V|+c(E)}{k},
\]
where $b_n:=\#\{A\in\mathcal{C}_{\Omega}:|A|=n\}$.
\end{corollary}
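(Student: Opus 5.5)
The plan is to obtain Corollary~\ref{CorBiased} as a direct specialization of \Cref{Result1}, using \Cref{PropBiasedSemi} to guarantee the semimatroid hypothesis. By \Cref{PropBiasedSemi}, the triple $(E,\mathcal{C}_\Omega,r_\Omega)$ is a semimatroid. Its collection of central sets is exactly the family of balanced edge sets. Hence the numbers $f_n$ in \Cref{Result1} coincide with $b_n=\#\{A\in\mathcal{C}_\Omega:|A|=n\}$.

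Next I will identify the Tutte polynomial and the rank. The balanced Tutte polynomial $T^b_\Omega(x,y)$ is defined by the same subset expansion as Ardila's $T_{\mathcal{C}}(x,y)$, with $r(\mathcal{C})$ replaced by $r(\Omega)$. So I must check that the semimatroid rank $r(\mathcal{C}_\Omega)$, the common rank of the maximal central sets, equals $r(\Omega)=|V|-c(E)$.

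For this I will use the observation made just before the definition of the balanced polynomials. Every spanning forest $F$ of $G$ is acyclic, so it is vacuously balanced, and $r_\Omega(F)=r_G(E)=|V|-c(E)$. Moreover, every balanced set $A$ satisfies $r_\Omega(A)=r_G(A)\le r_G(E)$. Thus the maximum of $r_\Omega$ over $\mathcal{C}_\Omega$ is $|V|-c(E)$. Since all maximal central sets of a semimatroid have the same rank, and that rank is the maximum value of $r_{\mathcal{C}}$ by (R2), we get $r(\mathcal{C}_\Omega)=|V|-c(E)$. Hence $T_{\mathcal{C}_\Omega}(x,y)=T^b_\Omega(x,y)$, and the coefficients $t_{ij}$ agree.

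Finally, I substitute $f_n=b_n$ and $r(\mathcal{C})=|V|-c(E)$ into the right-hand side of \Cref{Result1}, which gives the stated identity for every $k\ge 0$. The only non-bookkeeping point is the rank identification. Even this step is not really an obstacle: alternatively one can bypass the semimatroid structure entirely and apply \Cref{Result2} to the ranked central set $(E,\mathcal{C}_\Omega,r_\Omega)$. There (R0) and (R1) hold trivially, and the rank is defined directly as the maximum, which we computed above.
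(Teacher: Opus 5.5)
Your proposal is correct and follows the paper's own argument. The paper likewise specializes \Cref{Result1} to the semimatroid $(E,\mathcal{C}_\Omega,r_\Omega)$ of \Cref{PropBiasedSemi}, with $f_n=b_n$ and $r(\mathcal{C}_\Omega)=|V|-c(E)$ obtained from the spanning-forest observation. Your side remark is also sound: since $r_\Omega(\emptyset)=0$ and $r_\Omega(A)=r_G(A)\le|A|$, \Cref{Result2} gives the identity directly, so \Cref{Balanced} and \Cref{PropBiasedSemi} are not actually needed for this particular corollary.
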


Signed graphs and gain graphs form important subclasses of biased graphs. As a direct consequence, \Cref{CorBiased} holds for their balanced Tutte polynomials. Finally, if every circle of $G$ is balanced, then $\mathcal{C}_{\Omega}=2^E$, and $T_{\Omega}^b(x,y)$ is the ordinary Tutte polynomial $T_G(x,y)$ of $G$. In this case, $b_n = \binom{|E|}{n}$. Following the same reasoning as in \eqref{A=B}, \Cref{CorBiased} simplifies to the generalized Brylawski identity for graphs:
\[
\sum_{i=0}^{k}\sum_{j=0}^{k-i}\binom{k-i}{j}(-1)^jt_{ij}=(-1)^{|E|-|V|+c(E)}\binom{k-|V|+c(E)}{k-|E|},
\]
which was first established in \cite[Theorem 1.2]{BKCP2023}.
\section*{Acknowledgements}
This work is supported by the Guangdong Basic and Applied Basic Research Foundation (Grant No. 2026A1515012237, Grant No. 2026A1515012543). 

\end{document}